\newtheorem{theorem}{Theorem}
\newtheorem{example}{Example}
\newcommand{\XRLT}[0]{X^{\text{\scriptsize RLT}}}
\newcommand{\XQ}[0]{X^{\text{\scriptsize Q}}}
\newcommand{\LPQR}[0]{(LP$_{\text{QR}}$)}
\newcommand{\LPQRd}[0]{(LP$_{\text{QR},d}$)}
\title{Degree reduction techniques for polynomial optimization problems}
\author[]{Brais Gonz\'alez-Rodr\'iguez}
\author[]{Joe Naoum-Sawaya\footnote{Corresponding Author}}
\affil[]{Ivey Business School, Western University, 1255 Western Road, London, Ontario, Canada, N6G 4W1.\newline brodriguez@ivey.ca, jnaoumsa@uwaterloo.ca}
\date{}
\begin{document}
\linespread{1.25}
\onehalfspacing

\begin{titlepage}
\maketitle

\begin{abstract}
\noindent {This paper presents a new approach to quadrify a polynomial programming problem, i.e. reduce the polynomial program to a quadratic program, before solving it. The proposed approach, QUAD-RLT, exploits the Reformulation-Linearization Technique (RLT) structure to obtain smaller relaxations that can be solved faster and still provide high quality bounds. QUAD-RLT is compared to other quadrification techniques that have been previously discussed in the literature. The paper presents theoretical as well as computational results showing the advantage of QUAD-RLT compared to other quadrification techniques. Furthermore, rather than quadrifying a polynomial program, QUAD-RLT is generalized to reduce the degree of the polynomial to any degree. Computational results show that reducing the degree of the polynomial to a degree that is higher than two provides computational advantages in certain cases compared to fully quadrifying the problem. Finally, QUAD-RLT along with other quadrification/degree reduction schemes are implemented and made available in the freely available software RAPOSa. 
\ \\ \ \\
\noindent \textbf{Keywords:} Nonlinear Programming, Quadrification, Degree Reduction, Reformulation-Linearization Technique, Polynomial Programming.
}
\end{abstract}
\end{titlepage}

\setcounter{page}{2}

\section{Introduction}

Polynomial programming problems are a powerful class of optimization problems that arise in various fields including engineering, economics, and computer science. The ability to efficiently solve such problems has profound implications for decision-making, resource allocation, portfolio optimization, and system design \citep{Cafieri2017, Ghaddar2017, GonzalezDiaz2021}. However, the inherent complexities that are due to the nonlinearity of the problems often render conventional optimization methods ineffective.

In recent years, significant progress has been made to address the computational challenges posed by polynomial programming problems. One promising approach involves the utilization of the Reformulation-Linearization Technique (RLT), \citep{Sherali1992, Sherali1999}. Different implementations of this technique can be found in \cite{Dalkiran2016} and in \cite{González-Rodríguez2023}. Other common approaches for solving polynomial programming problems include the use of semidefinite programming \citep{lasserre11global}, B-Splines \citep{Gawali2017}, linearizations and convexifications techniques \citep{ghaddar2011second, xia2017completely, Elloumi2023}, cutting planes \citep{Gorge2015, ghaddar2016dynamic}, as well as approximate approaches \citep{Li1998, kuang2018completely, kuang2019alternative}. Moreover, many solvers have been developed in recent years to deal with nonlinear programming particularly polynomial programming such as BARON \citep{Tawarmalani2002}, Couenne \citep{Belotti2013}, and ANTIGONE \citep{Misener2014}.

Recently, quadrification techniques in the context of RLT have been proposed to transform a polynomial optimization problem into a quadratic problem \citep{Dalkiran2018}. Quadrification involves the transformation of higher-order polynomials into a series of quadratic expressions while preserving the original problem's feasibility and optimality. By employing quadrification in conjunction with RLT, one can significantly enhance the tractability of polynomial programming problems of a high degree, enabling efficient and reliable solutions even for large-scale instances.

 In this paper, we propose a new quadrification scheme that outperforms the ones presented in \cite{Dalkiran2018}. 
 We also present theoretical results showing the strengths of our proposed approach in terms of {the quality of the resulting bounds and the computational time required to obtain those bounds}. As a middle ground between quadrification and solving a high-degree polynomial, degree reduction transforms the high-degree polynomial into a lower-degree expression while preserving the problem's essential characteristics. By leveraging algebraic manipulations and variable substitutions, the original problem can be transformed into an equivalent but simplified form that is more amenable to common optimization techniques. Thus, in this paper, we also extend all the quadrification techniques of \cite{Dalkiran2018} and our newly introduced quadrification scheme to reduce the degree of a polynomial program, without necessarily quadrifying it. We note that the proposed quadrification and degree reduction schemes can be extended to more general models such as \cite{Borchers1994} and \cite{Wang2011}. Finally, the contributions of this paper extend beyond the theoretical and computational aspects, as we also provide an implementation of the discussed quadrification schemes in a freely available polynomial optimization software, {RAPOSa}.

The remainder of this paper is structured as follows. In {Section~\ref{sec:rltandquad}}, we summarize the main points of the RLT and the quadrification schemes introduced in \cite{Dalkiran2018} and present new theoretical results regarding the obtained linear relaxations. In Section~\ref{sec:newapproach}, we present the proposed approach to quadrify a polynomial optimization problem as well as computational results benchmarking against the approaches of \cite{Dalkiran2018}. In Section~\ref{sec:degreered}, we first extend the quadrification schemes to reduce the degree of a polynomial program and then present computational results evaluating the discussed degree reduction techniques. Finally in Section~\ref{sec:conclusions} we conclude the paper and discuss potential future work.

\section{The Reformulation-Linearization Technique and Quadrification Schemes for Polynomial Problems }\label{sec:rltandquad}

This section presents a summary of the Reformulation-Linearization Technique \citep{Sherali1992} and discusses the quadrification schemes of \cite{Dalkiran2018} along with their impact on the performance of the RLT.

\subsection{The Reformulation-Linearization Technique}\label{sec:rlt}
The Reformulation-Linearization Technique was originally introduced in \cite{Sherali1992}. It was designed to find a global optimum of polynomial optimization problems of the form
\begin{equation}
\begin{array}{rll}
\text{minimize} &  \phi_0(\boldsymbol{x}) &\\
\text{subject to}  &  \phi_r(\boldsymbol{x})\geq \beta_r, &  r=1,\ldots, R_1 \\
&  \phi_r(\boldsymbol{x})=\beta_r, &  r=R_1+1,\ldots,R\\
&  \boldsymbol{x}\in\Omega \subset \mathbb{R}^n\text{,} &
\end{array}
\tag{PP}
\label{eq:PP}
\end{equation}
where $N = \lbrace 1, \dots, n \rbrace$ denotes the set of variables, each $\phi_r(\boldsymbol{x})$ is a polynomial of degree $\delta_r \in \mathbb{N}$ and $\Omega = \lbrace \boldsymbol{x} \in \mathbb{R}^n: 0 \leq l_j \leq x_j \leq u_j < \infty, \, \forall j \in N \rbrace \subset \mathbb{R}^n$ is a hyperrectangle containing the feasible region. The degree of problem \eqref{eq:PP} is $\delta=\max_{r \in \{0,\ldots,R\}} \delta_r$.

In this paper, we associate for each monomial the corresponding multiset containing the indices of the variables (with repetitions) in the monomial. Given a multiset $J$, we define its size $|J|$ as the number of variables in the monomial (including repetitions), which is the degree of the monomial. We also define the multiset $N^\delta$, which is the multiset containing $\delta$ times each element in $N$.

The Reformulation-Linearization Technique can be combined with a branch-and-bound algorithm to solve the linear relaxations of the polynomial problem~\eqref{eq:PP}. These linear relaxations are obtained by replacing each monomial of degree greater than one with a new RLT variable. For instance, if we consider the monomial $x_1x_2x_3$, we replace this monomial with a new RLT variable called $\XRLT_{123}$. More generally, the RLT variables are defined as
\begin{equation}
  \XRLT_J = \prod_{j \in J}x_j, 
  \label{eq:RLTidentity}
\end{equation}
where $J$ is a multiset containing the information about the multiplicity of each variable in the underlying monomial. Then, at each node of the branch-and-bound tree, one would solve the corresponding linear relaxation. Whenever we get a solution of a linear relaxation in which the identities in~\eqref{eq:RLTidentity} hold, we get a feasible solution for~\eqref{eq:PP}. Otherwise, the violations of these identities are used to choose the branching variable.

\cite{Sherali1992} proves the convergence of this branch-and-bound scheme to a global optimum of the polynomial problem~\eqref{eq:PP} if certain constraints, called bound-factor constraints, are added to the linear relaxations. These constraints are of the form
\begin{equation}\label{eq:boundfactor}
F_{\delta}(J_1,J_2)=\prod_{j\in J_1}{(x_j-l_j)}\prod_{j\in J_2}{(u_j-x_j)}\geq 0
\end{equation}
for each pair of multisets $J_1$ and $J_2$ such that $J_1 \cup J_2 \subset N^\delta$ and $|J_1 \cup J_2| = \delta$.

\cite{Sherali2013} shows that it is not necessary to add all bound-factor constraints to the linear relaxations, since certain subsets are enough to ensure the convergence of the algorithm. More precisely, the convergence to a global optimum only requires the inclusion of bound-factor constraints where $J_1 \cup J_2$ is a monomial that appears in~\eqref{eq:PP}, regardless of its degree. Furthermore, convergence is also preserved if, whenever the bound-factor constraints associated with a monomial $J$ are present, all the bound-factor constraints associated with monomials $J'\subset J$ are removed. As a result, $J$-sets are defined as those monomials of degree greater than one present in~\eqref{eq:PP} which are not included in any other monomial (multiset inclusion). In this paper, we denote $\mathbb{J}$ as the set of all the $J$-sets {for a given polynomial problem}.

Given a polynomial $\phi(\boldsymbol{x})$, we denote its linearization by $[\phi(\boldsymbol{x})]_L$, which is the result of replacing every monomial of degree greater than one in $\phi(\boldsymbol{x})$ by the corresponding RLT variable from~\eqref{eq:RLTidentity}. The resulting RLT relaxation solved at the first node of the branch-and-bound tree is then
\begin{equation}
\begin{array}{rll}
\text{minimize} &  [\phi_0(\boldsymbol{x})]_L &\\
\text{subject to}  &  [\phi_r(\boldsymbol{x})]_L\geq \beta_r, & r=1,\ldots, R_1 \\
&  [\phi_r(\boldsymbol{x})]_L=\beta_r, & r=R_1+1,\ldots,R\\
&  [F_{\delta}(J_1,J_2)]_L \geq 0,  & J_1\cup J_2 \in \mathbb{J}\\
&  \boldsymbol{x}\in\Omega \subset \mathbb{R}^n. &
\end{array}
\tag{LP}
\label{eq:LP}
\end{equation}

{Quadratic problems are a particular case of polynomial problems where the degree of the problem is two. Reducing the polynomial problem to a degree two, i.e. quadrifying the problem, along with the RLT can lead to significant computational benefits when solving the problem to optimality \citep{Dalkiran2018}. In the next section, we present the quadrification schemes introduced in \cite{Dalkiran2018} followed by our newly proposed quadrification scheme in Section~\ref{sec:newapproach}.}

\subsection{Quadrification Schemes}\label{sec:quad}

{Quadrification schemes are based on adding new variables to replace the nonlinear terms in the polynomial programming problem, along with new constraints to represent the relations between these new variables and the corresponding nonlinear terms.} \cite{Dalkiran2018} proposed three different schemes to quadrify a polynomial optimization problem of the form~\eqref{eq:PP} and evaluated their impact on the linear relaxations generated by the RLT algorithm. For completeness, this sections provides a summary of these three quadrification schemes.

In the following, we denote with the superscript $Q$ the new variables created by the quadrification and define $\XQ_J$ as 
\begin{equation}
  \XQ_J = \prod_{j \in J}x_j.
  \label{eq:QUADidentity}
\end{equation}

The commonality between the three schemes of \cite{Dalkiran2018} is that each monomial $\prod_{j\in J}x_j$ of degree greater than 2 in~\eqref{eq:PP} is replaced by a new variable $\XQ_J$ while the three schemes differ in the quadratic constraints that are added to ensure the validity of the quadrification {of $\prod_{j\in J}x_j$}. These constraints are summarized next for each of the three schemes.

\begin{description}
\item {\bfseries Scheme 1}. For Scheme 1, the following quadratic constraints are added
\begin{equation}\label{eq:scheme1}
\begin{array}{c}
     \XQ_J=\XQ_{j_1\ldots j_{|J|-1}}x_{j_{|J|}} \\
     \XQ_{j_1\ldots j_{|J|-1}} = \XQ_{j_1\ldots j_{|J|-2}}x_{j_{|J|-1}} \\
     \vdots\\
     \XQ_{j_1j_2}=x_{j_1}x_{j_2}. \\         
\end{array}
\end{equation}
\item {\bfseries Scheme 2}. In this scheme, for each $J^{\prime}\subseteq J$, a new variable $\XQ_{J^\prime}$ is defined and the following quadratic constraints are added
\begin{equation}\label{eq:scheme2}
\XQ_{J^\prime}=\XQ_{J_1}\XQ_{J_2},\text{ for each } J_1, J_2 \text{ such as } J_1\cup J_2 = J^\prime.
\end{equation}
Note that constraints~\eqref{eq:scheme2} are only needed when $J$ is a $J$-set, since these constraints are redundant for any multiset included in a $J$-set.
\item {\bfseries Scheme 3}. In this scheme, for each $J^\prime\in \bigcup_{d=2}^\delta N^d$, a new variable $\XQ_{J^\prime}$ is defined and the following quadratic constraints are added
\begin{equation}\label{eq:scheme3}
\XQ_{J^\prime}=\XQ_{J_1}\XQ_{J_2}, \text{ for each } J_1, J_2 \text{ such as } J_1\cup J_2 = J^\prime.
\end{equation}
\end{description}

{Thus by applying one of the quadrification schemes (Schemes 1--3), a high degree polynomial problem is reduced to a quadratic problem. \cite{Dalkiran2018} also provided a linear relaxation that replaces the quadratic terms resulting from applying Schemes 1--3 by using McCormick linear constraints \citep{McCormick1976}. Note that this is equivalent to creating the RLT relaxation using the $J$-sets of the corresponding quadrified problem since the McCormick constraints are degree-two bound-factor constraints. Subsequently, \cite{Dalkiran2018} proved that the linear relaxation generated by the original RLT (without $J$-sets) is at least as tight as the ones generated by Schemes 1--3. They also showed that the linear relaxation resulting from Scheme~3 is at least as tight as the one resulting from Scheme 2, which is at least as tight as the one resulting from Scheme 1. Finally, they showed that the linear relaxation generated by the RLT with $J$-sets is at least as tight as the ones generated by Schemes 1 and 2}.

While \cite{Dalkiran2018} showed improving computational results on randomly generated polynomial programming instances with degrees between 3 and 10, as we show in Section~\ref{sec:results}, for higher degree polynomials the relaxations become intractable due to their increasing sizes. Next, we present new tightness results for the quadrification schemes of \cite{Dalkiran2018} when $J$-sets are applied. We also provide new results related to the size of the RLT relaxation in terms of the number of constraints and variables which have not been presented before in the literature.

\subsection{New Dimension Results for Different Quadrification Schemes}\label{sec:tight_size_theorems}
To be able to discuss the new dimension results, we first explicitly present the linear relaxations~\eqref{eq:LPsch1}, \eqref{eq:LPsch2}, and \eqref{eq:LPsch3} solved at the root of the branch-and-bound tree after applying $J$-sets with Schemes~1, 2, and 3, respectively. Given a polynomial $\phi(\boldsymbol{x})$, we denote its quadrification by $[\phi(\boldsymbol{x})]_Q$ which is the result of replacing every monomial of degree greater than two present in $\phi(\boldsymbol{x})$ by the corresponding quadratic variable $\XQ_J$. The RLT relaxations after applying each of the schemes are as follows.

\begin{description}
\item {\bfseries RLT relaxation after applying Scheme~1}
\begin{equation}
\begin{array}{rll}
\text{minimize} &  [[\phi_0(\boldsymbol{x})]_Q]_L &\\
\text{subject to}  &  [[\phi_r(\boldsymbol{x})]_Q]_L\geq \beta_r, & 
r=1,\ldots, R_1 \\
&  [[\phi_r(\boldsymbol{x})]_Q]_L=\beta_r, & r=R_1+1,\ldots,R\\
& \left. \hspace{-0.6cm} \begin{array}{rll}
&  \XQ_J=[\XQ_{j_1\ldots j_{|J|-1}}x_{j_{|J|}}]_L \\
&     \XQ_{j_1\ldots j_{|J|-1}} = [\XQ_{j_1\ldots j_{|J|-2}}x_{j_{|J|-1}}]_L \\
&     \vdots\\
&     \XQ_{j_1j_2}=[x_{j_1}x_{j_2}]_L \\  
\end{array}
\right] & |J| > 2, \text{ such that } \prod_{j\in J}x_j \text{ appears in~\eqref{eq:PP}} \\
&  [F_{\delta}(J_1,J_2)]_L \geq 0,  & J_1\cup J_2 \in \mathbb{J}_{Q1}\\
&  \boldsymbol{x}\in\Omega \subset \mathbb{R}^n &
\end{array}
\tag{LP$_1$}
\label{eq:LPsch1}
\end{equation}
where $\mathbb{J}_{Q1}$ is the set with all $J$-sets of the quadratic problem obtained after applying Scheme~1 to \eqref{eq:PP}. That is, all the quadratic terms in \eqref{eq:PP} together with multisets $\{\{j_1\ldots j_{|J|-1}\}, j_{|J|}\}$, $\{\{j_1\ldots j_{|J|-2}\}, j_{|J|-1}\}$, \ldots, $\{j_1,j_2\}$ for each multiset $J$ such that $|J| > 2$ and $\prod_{j\in J}x_j$ appearing in~\eqref{eq:PP}.

\item {\bfseries RLT relaxation after applying Scheme~2}
\begin{equation}
\begin{array}{rll}
\text{minimize} &  [[\phi_0(\boldsymbol{x})]_Q]_L &\\
\text{subject to}  &  [[\phi_r(\boldsymbol{x})]_Q]_L\geq \beta_r, & 
r=1,\ldots, R_1 \\
&  [[\phi_r(\boldsymbol{x})]_Q]_L=\beta_r, & r=R_1+1,\ldots,R\\
& \XQ_{J^\prime} = [\XQ_{J_1} \XQ_{J_2}]_L & J_1\cup J_2=J^\prime\subseteq J \text{ for each }J\in \mathbb{J} \text{ such that } |J| > 2 \\
&  [F_{\delta}(J_1,J_2)]_L \geq 0,  & J_1\cup J_2 \in \mathbb{J}_{Q2}\\
&  \boldsymbol{x}\in\Omega \subset \mathbb{R}^n &
\end{array}
\tag{LP$_2$}
\label{eq:LPsch2}
\end{equation}
where $\mathbb{J}_{Q2}$ is the set with all $J$-sets of the quadratic problem obtained after applying Scheme~2 to \eqref{eq:PP}. That is, all the quadratic terms in \eqref{eq:PP} together with multisets $\{J_1,J_2\}$, for each $J_1\cup J_2\subseteq J$,  with $J\in\mathbb{J}$ and $|J| > 2$.

\item {\bfseries RLT relaxation after applying Scheme~3}
\begin{equation}
\begin{array}{rll}
\text{minimize} &  [[\phi_0(\boldsymbol{x})]_Q]_L &\\
\text{subject to}  &  [[\phi_r(\boldsymbol{x})]_Q]_L\geq \beta_r, & 
r=1,\ldots, R_1 \\
&  [[\phi_r(\boldsymbol{x})]_Q]_L=\beta_r, & r=R_1+1,\ldots,R\\
& \XQ_{J^\prime} = [\XQ_{J_1} \XQ_{J_2}]_L & J_1\cup J_2=J^\prime\in \bigcup_{d=2}^\delta N^d  \\
&  [F_{\delta}(J_1,J_2)]_L \geq 0,  & J_1\cup J_2 \in \mathbb{J}_{Q3}\\
&  \boldsymbol{x}\in\Omega \subset \mathbb{R}^n &
\end{array}
\tag{LP$_3$}
\label{eq:LPsch3}
\end{equation}
where $\mathbb{J}_{Q3}$ is the set with all $J$-sets of the quadratic problem obtained after applying Scheme~3 to \eqref{eq:PP}. That is, all the multisets $\{J_1,J_2\}$, for each $J_1\cup J_2\in\bigcup_{d=2}^\delta N^d$.
\end{description}

{In the following we denote by $S$, $S_1$, $S_2$, and $S_3$, the feasible regions of \eqref{eq:LP}, \eqref{eq:LPsch1}, \eqref{eq:LPsch2}, and \eqref{eq:LPsch3}, respectively. \cite{Dalkiran2018} proved that $S_3\subseteq S_2 \subseteq S_1$ and $S\subseteq S_2$. Next, we {also provide results} about the size of the relaxations.}





\begin{theorem}\label{th:sizerel}
Let $N^{LP}_1$, $N^{LP}_2$, and $N^{LP}_3$ be the number of variables in~\eqref{eq:LPsch1}, \eqref{eq:LPsch2}, and \eqref{eq:LPsch3}, respectively. Let $R^{LP}_1$, $R^{LP}_2$, and $R^{LP}_3$ be the number of constraints in~\eqref{eq:LPsch1}, \eqref{eq:LPsch2}, and \eqref{eq:LPsch3}, respectively. It holds that $N^{LP}_1\leq N^{LP}_2 \leq N^{LP}_3$ and $R^{LP}_1\leq R^{LP}_2 \leq R^{LP}_3$.
\end{theorem}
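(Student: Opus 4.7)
The plan is to prove both inequalities simultaneously by establishing the stronger structural fact that the sets of $\XQ$ variables, $\XRLT$ variables, and constraints of~\eqref{eq:LPsch1} are each contained in the corresponding sets of~\eqref{eq:LPsch2}, and likewise from~\eqref{eq:LPsch2} to~\eqref{eq:LPsch3}. This component-wise inclusion immediately yields the counting inequalities. Note that the quadrified objective and constraints $[[\phi_r(\boldsymbol{x})]_Q]_L$ are common to all three relaxations, since the quadrification map $[\cdot]_Q$ depends only on the original polynomial, so the $R$ ``original'' constraints contribute equally on both sides and the comparison reduces to the scheme-specific pieces.

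For the $\XQ$ variables, I would observe that Scheme~1, applied to a monomial $J$ of~\eqref{eq:PP} with $|J|>2$, introduces new variables only for the prefix multisets $\{j_1,j_2\}, \{j_1,j_2,j_3\}, \ldots, J$, each of which is contained in some $J$-set $\tilde J \supseteq J$. Scheme~2 introduces $\XQ_{J'}$ for every $J' \subseteq \tilde J$ with $|J'|\geq 2$, and Scheme~3 introduces $\XQ_{J'}$ for every $J' \in \bigcup_{d=2}^\delta N^d$, which trivially contains every subset of every $J$-set. For the identity constraints, each Scheme~1 chain constraint $\XQ_J = [\XQ_{j_1\ldots j_{|J|-1}}x_{j_{|J|}}]_L$ is the special case of the Scheme~2 form $\XQ_{J'} = [\XQ_{J_1}\XQ_{J_2}]_L$ in which $J_2$ is a singleton and $J'\subseteq\tilde J$, and every Scheme~2 identity appears in Scheme~3 since $J'\subseteq \tilde J$ implies $J' \in \bigcup_{d=2}^\delta N^d$. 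The inclusions $\mathbb{J}_{Q1} \subseteq \mathbb{J}_{Q2} \subseteq \mathbb{J}_{Q3}$ follow from the same partition argument, and since the quadrified problem has degree two, each element of $\mathbb{J}_Q$ contributes the same number of bound factor constraints in every scheme. Finally, the $\XRLT$ variables are in bijection with the distinct quadratic products that appear as nonlinear terms across the constraints of each relaxation, so the nesting of the constraint sets carries over to nesting of the $\XRLT$ variable sets. Adding up $\XQ$ plus $\XRLT$ variables gives $N^{LP}_1 \leq N^{LP}_2 \leq N^{LP}_3$, and adding identity constraints plus bound factor constraints (together with the common $R$ constraints from $\phi_r$) gives $R^{LP}_1 \leq R^{LP}_2 \leq R^{LP}_3$.

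The main obstacle is the bookkeeping: $\XQ$ variables, $\XRLT$ variables, and bound factor constraints are all indexed by multisets whose precise form shifts between the schemes, and care is needed to identify, for instance, the pair $\{\{j_1,\ldots,j_k\}, j_{k+1}\}$ from Scheme~1's chain with the corresponding partition of $\{j_1,\ldots,j_{k+1}\}$ used in Schemes~2 and~3, and to pin down the convention $\XQ_{\{j\}} = x_j$ that identifies singletons with original variables. Once this identification is made explicit, the argument reduces to routine set-inclusion and no substantive inequalities need to be proved; in particular, the inclusions are strict whenever the polynomial contains a monomial of degree greater than three with a $J$-set that is a proper subset of some multiset in $\bigcup_{d=2}^\delta N^d$, which explains the typically large gap observed between the relaxation sizes in practice.
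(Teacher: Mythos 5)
Your proposal is correct and follows essentially the same route as the paper's proof: showing that the scheme-defining constraints and the $J$-sets of the quadrified problems are nested ($\mathbb{J}_{Q1}\subseteq\mathbb{J}_{Q2}\subseteq\mathbb{J}_{Q3}$), so that both the constraint sets and the variable sets are component-wise included from one relaxation to the next. Your version simply spells out more of the bookkeeping (the identification of Scheme~1's chain constraints with the singleton-partition case of Scheme~2, and the nesting of the $\XQ$ and $\XRLT$ variable sets) that the paper leaves implicit.
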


\begin{proof}
Note that for each multiset $J$, the constraints in~\eqref{eq:scheme1} are included in the constraints in~\eqref{eq:scheme2} (and hence their respective linearizations). Moreover, we also have that $\mathbb{J}_{Q1}\subset \mathbb{J}_{Q2}$. Thus, it follows that $R^{LP}_1\leq R^{LP}_2$. Furthermore, since the objective function does not change between the relaxations, we also have that $N^{LP}_1\leq N^{LP}_2$.

\noindent The same argument can be used to prove $R^{LP}_2\leq R^{LP}_3$ and $N^{LP}_2\leq N^{LP}_3$, since all the constraints in~\eqref{eq:scheme2} are included in~\eqref{eq:scheme3} and $\mathbb{J}_{Q2}\subset \mathbb{J}_{Q3}$.
\end{proof}

\noindent Theorem~\ref{th:sizerel} compares the RLT relaxations resulting from the three quadrification schemes. In the following, we compare these relaxations to \eqref{eq:LP} which is obtained by applying directly the RLT with $J$-sets to~\eqref{eq:PP}.



\begin{theorem}\label{th:sizejsets}
Let $N^{LP}$ be the number of variables in~\eqref{eq:LP}. It holds that $ N^{LP} \leq N^{LP}_2$. 
\end{theorem}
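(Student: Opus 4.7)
My plan is to exhibit an injection (in fact a bijection) between the non-$x_j$ variables of \eqref{eq:LP} and those of \eqref{eq:LPsch2}. Since both relaxations share the same original variables $x_j$, it suffices to compare the RLT variables of \eqref{eq:LP} with the $\XQ$ variables of \eqref{eq:LPsch2} together with the RLT variables produced by the quadratic $J$-sets that Scheme~2 leaves untouched.

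First, I would describe the RLT variables in \eqref{eq:LP}. The monomials in the linearized objective and constraints are sub-multisets of $J$-sets, and expanding each bound-factor constraint $F_{|J|}(J_1,J_2)$ with $J_1\cup J_2=J\in\mathbb{J}$ produces one monomial per sub-multiset of $J$. Hence the non-$x_j$ variables of \eqref{eq:LP} are indexed by the set $\{J':|J'|\ge 2 \text{ and } J'\subseteq J \text{ for some } J\in\mathbb{J}\}$.

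Next, I would catalog the non-$x_j$ variables of \eqref{eq:LPsch2}: the Scheme~2 variables $\XQ_{J'}$ for all sub-multisets $J'$ of size $\ge 2$ of a $J$-set $J$ with $|J|>2$, plus the RLT variables arising when the quadratic $J$-sets of \eqref{eq:PP} are linearized. A routine but important check is that the bound-factor constraints indexed by $\mathbb{J}_{Q2}$ introduce no additional variables, because each quadratic term $\XQ_{J_1}\XQ_{J_2}$ in such a constraint linearizes to $\XQ_{J_1\cup J_2}$, already among the Scheme~2 variables.

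Finally, I would define the map $\XRLT_{J'}\mapsto\XQ_{J'}$ when $J'$ is a sub-multiset of a $J$-set with $|J|>2$, and $\XRLT_{J'}\mapsto\XRLT_{J'}$ when $J'\in\mathbb{J}$ and $|J'|=2$. The main subtlety, and essentially the only one, is showing that these two cases are disjoint and exhaustive. Disjointness rests on the maximality in the definition of a $J$-set: a quadratic $J$-set cannot be a proper sub-multiset of any larger $J$-set. Exhaustiveness is the dichotomy that $|J|>2$ or $|J|=2$ (in the latter case $J'=J\in\mathbb{J}$). Injectivity of the map is then immediate, yielding $N^{LP}\le N^{LP}_2$.
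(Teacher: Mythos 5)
Your argument is correct and follows essentially the same route as the paper's proof: both identify the RLT variables of \eqref{eq:LP} with the sub-multisets of size at least two of the $J$-sets and match each one to the corresponding Scheme-2 variable $\XQ_{J^\prime}$ of \eqref{eq:LPsch2} (your explicit injection, with the separate treatment of the quadratic $J$-sets, is if anything a cleaner formalization, since it avoids double-counting sub-multisets shared by overlapping $J$-sets). One claim in your ``routine check'' is wrong under the paper's conventions, though harmlessly so: the linearization $[\XQ_{J_1}\XQ_{J_2}]_L$ is \emph{not} the variable $\XQ_{J_1\cup J_2}$ but a brand-new RLT variable of the quadrified problem --- otherwise constraint~\eqref{eq:scheme2} would be vacuous; compare Example~\ref{ex:vars}, where $[\XQ_{12}x_3]_L$ and $\XQ_{123}$ are counted as distinct variables. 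This error only undercounts the variables of \eqref{eq:LPsch2} (these extra RLT variables are exactly why the paper's proof says Scheme~2 needs ``more than'' $\sum_{k=2}^{|J|}C(J,k)$ variables per $J$-set), so your injection still lands inside the true variable set and the conclusion $N^{LP}\leq N^{LP}_2$ survives, indeed with room to spare.
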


\begin{proof}
{
First, given a multiset $J$ and a positive integer $k$, we denote by $C(J,k)$ the number of possible combinations taking $k$ elements from $J$. The number of these combinations is known when $J$ is a set (without repeated elements) and is given by the combinatorial number ${\binom{|J|}{k}} = \frac{|J|!}{k!(|J|-k)!}$. However, computing this number when $J$ is a multiset requires the solution of a diophantine equation (see \cite{Ehrlich1973} and \cite{Ruskey1996}). Nevertheless, we do not need to compute explicitly the value of that combinatorial number for this proof. In~\eqref{eq:LP}, we need $\sum_{k=2}^{|J|} C(J,k)$ variables to linearize each $J\in\mathbb{J}$. This is because of the fact that, given a multiset $J$, we need one RLT variable for each possible combination between the elements in $J$, in order to linearize the resulting bound-factor constraints. For example, if we consider the monomial $J=\{1,2,3\}$, one bound-factor constraint is $(x_1-l_1)(x_2-l_2)(x_3-l_3)\geq 0$ and when we linearize it, we end up having the variables $X_{123}$, $X_{12}$, $X_{13}$, and $X_{23}$ (the other linearized bound factor constraints have the same variables since they only differ in the lower or upper bound).

For \eqref{eq:LPsch2}, for each $J\in\mathbb{J}$ we need more than $\sum_{k=2}^{|J|} C(J,k)$ variables. In fact, for each $J$, we have constraints of the form~\eqref{eq:scheme2} which require $\sum_{k=2}^{|J|} C(J,k)$ new variables. Furthermore, a new RLT variable $\XRLT_{J_1J_2}$ is needed to linearize each quadratic term that appears in Constraints~\eqref{eq:scheme2}. Therefore, it holds that $ N^{LP} \leq N^{LP}_2$.
}
\end{proof}

{Theorem~\ref{th:sizejsets} shows that~\eqref{eq:LP} does not has {more} variables than~\eqref{eq:LPsch2}. Regarding the number of constraints, we do not have a proof, but the computational results in Appendix~\ref{appendix:results} show that $R^{LP} \leq R^{LP}_2$ holds where $R^{LP}$ is the number of constraints in~\eqref{eq:LP}\footnote{Appendix~\ref{appendix:results} shows aggregated results however we confirm that for each instance we have $R^{LP} \leq R^{LP}_2$ holds.}.}

Regarding comparing~\eqref{eq:LP} to~\eqref{eq:LPsch1}, Example~\ref{ex:cons} shows two polynomial programming problems wherein the first, ~\eqref{eq:LP} has more constraints than \eqref{eq:LPsch1}, while in the second one, \eqref{eq:LP} has less constraints than \eqref{eq:LPsch1}. Similarly, Example~\ref{ex:vars} shows two polynomial programming problems wherein the first, \eqref{eq:LP} has more variables than \eqref{eq:LPsch1}, while in the second one, \eqref{eq:LP} has less variables than \eqref{eq:LPsch1}.

\begin{example}\label{ex:cons}
Consider the following polynomial programming problem:
\begin{equation}
\begin{array}{rll}
\text{minimize} & x_1x_2x_3 &\\
\text{subject to}  &  0\leq l_j\leq x_j\leq u_j, &  r=1,2,3. \\
\end{array}
\tag{Ex$_1$}
\label{eq:PP1}
\end{equation}

\noindent Below, we present the RLT relaxations \eqref{eq:LP1_J} and \eqref{eq:LP1_1} of \eqref{eq:PP1}.

\begin{multicols}{2}
\noindent
\small
\begin{equation}
\begin{array}{rl}
\text{minimize} & \XRLT_{123} \\
\text{subject to}  &  [(x_1-l_1)(x_2-l_2)(x_3-l_3)]_L \geq 0 \\
&  [(x_1-l_1)(x_2-l_2)(u_3-x_3)]_L \geq 0 \\
&  [(x_1-l_1)(u_2-x_2)(x_3-l_3)]_L \geq 0 \\
&  [(x_1-l_1)(u_2-x_2)(u_3-l_3)]_L \geq 0 \\
&  [(u_1-x_1)(x_2-l_2)(x_3-l_3)]_L \geq 0 \\
&  [(u_1-x_1)(x_2-l_2)(u_3-x_3)]_L \geq 0 \\
&  [(u_1-x_1)(u_2-x_2)(x_3-l_3)]_L \geq 0 \\
&  [(u_1-x_1)(u_2-x_2)(u_3-x_3)]_L \geq 0 \\
&  0\leq l_j\leq x_j\leq u_j,~ r=1,2,3. \\
\end{array}
\tag{Ex$_1^{\text{LP}}$}
\label{eq:LP1_J}
\end{equation}
\begin{equation}
\begin{array}{rl}
\text{minimize} & \XQ_{123} \\
\text{subject to} & \XQ_{123}=[\XQ_{12}x_3]_L \\
& \XQ_{12} = [x_1x_2]_L \\
& [(\XQ_{12}-l_1l_2)(x_3-l_3)]_L \geq 0 \\
& [(\XQ_{12}-l_1l_2)(u_3-x_3)]_L \geq 0 \\
& [(u_1u_2-\XQ_{12})(x_3-l_3)]_L \geq 0 \\
& [(u_1u_2-\XQ_{12})(u_3-x_3)]_L \geq 0 \\
& [(x_1-l_1)(x_2-l_2)]_L \geq 0 \\
& [(x_1-l_1)(u_2-x_2)]_L \geq 0 \\
& [(u_1-x_1)(x_2-l_2)]_L \geq 0 \\
& [(u_1-x_1)(u_2-x_2)]_L \geq 0 \\
&  0\leq l_j\leq x_j\leq u_j,~ r=1,2,3. \\
\end{array}
\tag{Ex$_1^{\text{LP}_1}$}
\label{eq:LP1_1}
\end{equation}
\end{multicols}
\noindent The number of constraints in~\eqref{eq:LP1_J} is 8 which is lower than the number of constraints in~\eqref{eq:LP1_1} (10 constraints). Next we consider the following polynomial programming problem:
\begin{equation}
\begin{array}{rll}
\text{minimize} & x_1x_2x_3 + x_1x_2x_4 &\\
\text{subject to}  &  0\leq l_j\leq x_j\leq u_j, &  r=1,2,3,4. \\
\end{array}
\tag{Ex$_2$}
\label{eq:PP2}
\end{equation}

\noindent The RLT relaxations \eqref{eq:LP2_J} and \eqref{eq:LP2_1} of \eqref{eq:PP2} are 
\begin{multicols}{2}
\noindent
\small
\begin{equation}
\begin{array}{rl}
\text{minimize} & \XRLT_{123}+\XRLT_{124} \\
\text{subject to}  &  [(x_1-l_1)(x_2-l_2)(x_3-l_3)]_L \geq 0 \\
&  [(x_1-l_1)(x_2-l_2)(u_3-x_3)]_L \geq 0 \\
&  [(x_1-l_1)(u_2-x_2)(x_3-l_3)]_L \geq 0 \\
&  [(x_1-l_1)(u_2-x_2)(u_3-l_3)]_L \geq 0 \\
&  [(u_1-x_1)(x_2-l_2)(x_3-l_3)]_L \geq 0 \\
&  [(u_1-x_1)(x_2-l_2)(u_3-x_3)]_L \geq 0 \\
&  [(u_1-x_1)(u_2-x_2)(x_3-l_3)]_L \geq 0 \\
&  [(u_1-x_1)(u_2-x_2)(u_3-x_3)]_L \geq 0 \\
&  [(x_1-l_1)(x_2-l_2)(x_4-l_4)]_L \geq 0 \\
&  [(x_1-l_1)(x_2-l_2)(u_4-x_4)]_L \geq 0 \\
&  [(x_1-l_1)(u_2-x_2)(x_4-l_4)]_L \geq 0 \\
&  [(x_1-l_1)(u_2-x_2)(u_4-l_4)]_L \geq 0 \\
&  [(u_1-x_1)(x_2-l_2)(x_4-l_4)]_L \geq 0 \\
&  [(u_1-x_1)(x_2-l_2)(u_4-x_4)]_L \geq 0 \\
&  [(u_1-x_1)(u_2-x_2)(x_4-l_4)]_L \geq 0 \\
&  [(u_1-x_1)(u_2-x_2)(u_4-x_4)]_L \geq 0 \\
&  0\leq l_j\leq x_j\leq u_j,~ r=1,2,3,4, \\
\end{array}
\tag{Ex$_2^{\text{LP}}$}
\label{eq:LP2_J}
\end{equation}
\begin{equation}
\begin{array}{rl}
\text{minimize} & \XQ_{123}+\XQ_{124} \\
\text{subject to} & \XQ_{123}=[\XQ_{12}x_3]_L \\
& \XQ_{12} = [x_1x_2]_L \\
& \XQ_{124} = [\XQ_{12}x_4]_L \\
& [(\XQ_{12}-l_1l_2)(x_3-l_3)]_L \geq 0 \\
& [(\XQ_{12}-l_1l_2)(u_3-x_3)]_L \geq 0 \\
& [(u_1u_2-\XQ_{12})(x_3-l_3)]_L \geq 0 \\
& [(u_1u_2-\XQ_{12})(u_3-x_3)]_L \geq 0 \\
& [(x_1-l_1)(x_2-l_2)]_L \geq 0 \\
& [(x_1-l_1)(u_2-x_2)]_L \geq 0 \\
& [(u_1-x_1)(x_2-l_2)]_L \geq 0 \\
& [(u_1-x_1)(u_2-x_2)]_L \geq 0 \\
& [(\XQ_{12}-l_1l_2)(x_4-l_4)]_L \geq 0 \\
& [(\XQ_{12}-l_1l_2)(u_4-x_4)]_L \geq 0 \\
& [(u_1u_2-\XQ_{12})(x_4-l_4)]_L \geq 0 \\
& [(u_1u_2-\XQ_{12})(u_4-x_4)]_L \geq 0 \\
&  0\leq l_j\leq x_j\leq u_j,~ r=1,2,3,4. \\
\end{array}
\tag{Ex$_2^{\text{LP}_1}$}
\label{eq:LP2_1}
\end{equation}
\end{multicols}

\noindent The number of constraints in~\eqref{eq:LP2_J} is 16 which is larger than the number of constraints in~\eqref{eq:LP2_1} (15 constraints).
\end{example}

\begin{example}\label{ex:vars}
Consider the polynomial programming problem~\eqref{eq:PP1} and the relaxations \eqref{eq:LP1_J} and \eqref{eq:LP1_1}. We have 7 variables in \eqref{eq:LP1_J}: $x_1$, $x_2$, $x_3$, $\XRLT_{12}$, $\XRLT_{13}$, $\XRLT_{23}$, and $\XRLT_{123}$ while we have 6 variables in \eqref{eq:LP1_1}: $x_1$, $x_2$, $x_3$, $\XQ_{12}$, $[\XQ_{12}x_3]_L$, and $\XQ_{123}$.

Consider the following polynomial programming problem:

\begin{equation}
\begin{array}{rll}
\text{minimize} & x_1x_2x_3 +x_1x_2+x_1x_3+x_2x_3 &\\
\text{subject to}  &  0\leq l_j\leq x_j\leq u_j, &  r=1,2,3. \\
\end{array}
\tag{Ex$_3$}
\label{eq:PP4}
\end{equation}
The corresponding RLT relaxations \eqref{eq:LP4_J} and \eqref{eq:LP4_1} of \eqref{eq:PP4} are

\begin{multicols}{2}
\noindent
\small
\begin{equation}
\begin{array}{rl}
\text{minimize} & \XRLT_{123}+\XRLT_{12}+\XRLT_{13}+\XRLT_{23} \\
\text{subject to}  &  [(x_1-l_1)(x_2-l_2)(x_3-l_3)]_L \geq 0 \\
&  [(x_1-l_1)(x_2-l_2)(u_3-x_3)]_L \geq 0 \\
&  [(x_1-l_1)(u_2-x_2)(x_3-l_3)]_L \geq 0 \\
&  [(x_1-l_1)(u_2-x_2)(u_3-l_3)]_L \geq 0 \\
&  [(u_1-x_1)(x_2-l_2)(x_3-l_3)]_L \geq 0 \\
&  [(u_1-x_1)(x_2-l_2)(u_3-x_3)]_L \geq 0 \\
&  [(u_1-x_1)(u_2-x_2)(x_3-l_3)]_L \geq 0 \\
&  [(u_1-x_1)(u_2-x_2)(u_3-x_3)]_L \geq 0 \\
&  0\leq l_j\leq x_j\leq u_j,~ r=1,2,3, \\
\end{array}
\tag{Ex$_3^{\text{LP}}$}
\label{eq:LP4_J}
\end{equation}
\begin{equation}
\begin{array}{rl}
\text{minimize} & \XQ_{123}+\XRLT_{12}+\XRLT_{13}+\XRLT_{23} \\
\text{subject to} & \XQ_{123}=[\XQ_{12}x_3]_L \\
& \XQ_{12} = [x_1x_2]_L \\
& [(\XQ_{12}-l_1l_2)(x_3-l_3)]_L \geq 0 \\
& [(\XQ_{12}-l_1l_2)(u_3-x_3)]_L \geq 0 \\
& [(u_1u_2-\XQ_{12})(x_3-l_3)]_L \geq 0 \\
& [(u_1u_2-\XQ_{12})(u_3-x_3)]_L \geq 0 \\
& [(x_1-l_1)(x_2-l_2)]_L \geq 0 \\
& [(x_1-l_1)(u_2-x_2)]_L \geq 0 \\
& [(u_1-x_1)(x_2-l_2)]_L \geq 0 \\
& [(u_1-x_1)(u_2-x_2)]_L \geq 0 \\
& [(x_1-l_1)(x_3-l_3)]_L \geq 0 \\
& [(x_1-l_1)(u_3-x_3)]_L \geq 0 \\
& [(u_1-x_1)(x_3-l_3)]_L \geq 0 \\
& [(u_1-x_1)(u_3-x_3)]_L \geq 0 \\
& [(x_2-l_2)(x_3-l_3)]_L \geq 0 \\
& [(x_2-l_2)(u_3-x_3)]_L \geq 0 \\
& [(u_2-x_2)(x_3-l_3)]_L \geq 0 \\
& [(u_2-x_2)(u_3-x_3)]_L \geq 0 \\
&  0\leq l_j\leq x_j\leq u_j,~ r=1,2,3. \\
\end{array}
\tag{Ex$_3^{\text{LP}_1}$}
\label{eq:LP4_1}
\end{equation}
\end{multicols}
We have 7 variables in \eqref{eq:LP4_J}: $x_1$, $x_2$, $x_3$, $\XRLT_{12}$, $\XRLT_{13}$, $\XRLT_{23}$, and $\XRLT_{123}$ while we have 9 variables in \eqref{eq:LP4_1}: $x_1$, $x_2$, $x_3$, $\XRLT_{12}$, $\XRLT_{13}$, $\XRLT_{23}$, $\XQ_{12}$, $[\XQ_{12}x_3]_L$, and $\XQ_{123}$.
\end{example}

Examples~\ref{ex:cons} and \ref{ex:vars} show that there is no strict relationship between the sizes of \eqref{eq:LP} and~\eqref{eq:LPsch1}. Next we show that under certain loose conditions, relaxation \eqref{eq:LPsch1} generated by Scheme 1 is smaller than \eqref{eq:LP}. Particularly, Theorem~\ref{th:sizerel2} provided next shows that \eqref{eq:LPsch1} is smaller than \eqref{eq:LP} for polynomial problems that contain one monomial with $|J| \geq 4$ and $J$ is a set that does not contain variable repetition.



\begin{theorem}\label{th:sizerel2}
Consider a polynomial program~\eqref{eq:PP} with only one monomial $\prod_{j\in J}x_j$ with a degree greater than one where $J$  is a set without repeated variables. Let us also assume for simplicity, that all the variables of the problem appear in at least one monomial of degree one. It holds that, if $|J| \geq 4$, then  $N^{LP}_1 \leq N^{LP}$ and $R^{LP}_1 \leq R^{LP}$.
\end{theorem}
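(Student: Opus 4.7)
The plan is to prove this by explicit dimension counting in both relaxations, leveraging the very restrictive hypotheses: there is a single monomial of degree greater than one, its variable set $J$ has no repetitions, and every variable shows up in some linear term so the $n$ original variables are common to both LPs and cancel in the comparisons.

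First I would specialize \eqref{eq:LP} to this setting. Under the hypotheses, $\mathbb{J} = \{J\}$ with $|J| = d \geq 4$ and $J$ a genuine set. Hence the bound-factor constraints are indexed by the $2^d$ partitions $J = J_1 \cup J_2$ (each element of $J$ going to exactly one side), and the RLT variables needed to linearize the objective together with all these bound-factor constraints are exactly $\XRLT_S$ for every $S \subseteq J$ with $|S| \geq 2$, contributing $\sum_{k=2}^{d}\binom{d}{k} = 2^{d} - d - 1$ variables. Including the $n$ original variables and the $R$ linearized constraints that are common to both relaxations, this gives $N^{LP} = n + 2^{d} - d - 1$ and $R^{LP} = R + 2^{d}$.

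Second I would specialize \eqref{eq:LPsch1}. Scheme~1 introduces the tower of $d - 1$ new Q-variables $\XQ_{j_1 j_2}, \XQ_{j_1 j_2 j_3}, \ldots, \XQ_{j_1 \ldots j_d}$ through $d - 1$ equality constraints. The set $\mathbb{J}_{Q1}$ then consists of the $d - 1$ quadratic multisets appearing on the right-hand sides of those equalities; each is a pair of distinct variables in the quadratic formulation, so each yields $4$ bound-factor constraints, for a total of $4(d-1)$. Following the counting convention used in Example~\ref{ex:vars}, the identification $\XQ_{j_1 j_2} = [x_{j_1} x_{j_2}]_L$ merges one pair of variables at the base of the tower, while each higher-level equality $\XQ_{j_1\ldots j_k} = [\XQ_{j_1\ldots j_{k-1}} x_{j_k}]_L$ for $k \geq 3$ introduces one extra RLT variable on its right-hand side; this contributes $(d-1) + (d-2) = 2d - 3$ new variables. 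Thus $N^{LP}_1 = n + 2d - 3$ and $R^{LP}_1 = R + 5(d-1)$.

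Finally I would verify the two inequalities. $N^{LP}_1 \leq N^{LP}$ reduces to $3d - 2 \leq 2^{d}$, which is tight-ish at $d = 4$ (namely $10 \leq 16$) and holds for all larger $d$ by an easy induction since doubling beats adding~$3$. Similarly $R^{LP}_1 \leq R^{LP}$ reduces to $5(d-1) \leq 2^{d}$, which holds with just one unit of slack at $d = 4$ (namely $15 \leq 16$) and again propagates by induction. This explains both why the threshold in the statement is $|J| \geq 4$ and why it cannot be lowered: already at $d = 3$, Example~\ref{ex:cons} exhibits a case where the constraint inequality fails.

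The main obstacle is the variable bookkeeping in \eqref{eq:LPsch1}: I need to match the convention from Example~\ref{ex:vars} (where $\XQ_{j_1 j_2}$ and $\XRLT_{j_1 j_2}$ are identified but $\XQ_{j_1 \ldots j_k}$ and $[\XQ_{j_1 \ldots j_{k-1}} x_{j_k}]_L$ are counted separately for $k \geq 3$), and to argue that the bound-factor constraints in $\mathbb{J}_{Q1}$ introduce no further RLT variables because every quadratic product appearing in them already occurs on the right-hand side of some Scheme~1 equality. Once that bookkeeping is pinned down, the remainder is routine arithmetic.
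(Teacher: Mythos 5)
Your proposal is correct and follows essentially the same route as the paper's proof: explicitly count $N^{LP}=n+2^{|J|}-|J|-1$ and $R^{LP}=R+2^{|J|}$ for \eqref{eq:LP}, count $R^{LP}_1=R+5(|J|-1)$ and a linear-in-$|J|$ number of variables for \eqref{eq:LPsch1}, and then verify the two resulting inequalities for $|J|\geq 4$. The only divergence is your variable count $n+2|J|-3$ versus the paper's $n+2(|J|-1)$ --- you merge $\XQ_{j_1j_2}$ with $[x_{j_1}x_{j_2}]_L$ following the convention of Example~\ref{ex:vars}, while the paper's proof counts them separately --- a one-variable bookkeeping discrepancy that affects neither argument's validity.
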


\begin{proof}
The number of constraints in~\eqref{eq:LP} is $R$ (number of constraints in~\eqref{eq:PP}) plus the number of bound-factor constraints. Since the only monomial of degree greater than one is $\prod_{j\in J}x_j$, then the number of bound-factor constraints is $2^{|J|}$. Moreover, the number of variables in~\eqref{eq:LP} is $n$ (number of variables of~\eqref{eq:PP}) plus the number of RLT variables that appear in the bound-factor constraints, which is $\sum_{k=2}^{|J|}\frac{|J|!}{k!(|J|-k)!}$.

Regarding~\eqref{eq:LPsch1}, the number of constraints is $R$, plus the number of constraints from~\eqref{eq:scheme1} (which is $|J|-1$), plus the number of bound-factor constraints. For each quadratic term that appears in the problem after applying Scheme~1, we have 4 bound-factor constraints and hence, the number of constraints in~\eqref{eq:scheme1} is $R+|J|-1+4(|J|-1)=R+5(|J|-1)$.
Regarding the number of variables, we have the variables in~\eqref{eq:PP} (which is $n$) plus the number of variables generated by Scheme~1 (which is $|J|-1$) and the number of RLT variables generated, which is the number of quadratic terms in the problem after applying Scheme~1 (which is $|J|-1$). Therefore, the total number of variables in~\eqref{eq:LPsch1} is $n+2(|J|-1)$.

Thus, we need to prove that, for $|J|\geq 4$, it holds that $5(|J|-1) \leq 2^{|J|}$ and $2(|J|-1)\leq \sum_{k=2}^{|J|}\frac{|J|!}{k!(|J|-k)!}$. Solving numerically the inequality $5(|J|-1) \leq 2^{|J|}$ results in $|J|\leq 1.61$ or $|J|\geq 3.82$. Therefore, if $|J|\geq 4$, then $R^{LP}_1 \leq R^{LP}$. Furthermore, for $k=2$, we have $\frac{|J|!}{k!(|J|-k)!}=\frac{|J|(|J|-1)}{2}$. Rearranging the inequality $2(|J|-1)\leq \frac{|J|(|J|-1)}{2}$, we get $|J|\leq 1$ or $|J|\geq 4$. Therefore, for $|J|\geq 4$, we have $2(|J|-1)\leq \frac{|J|(|J|-1)}{2}\leq \sum_{k=2}^{|J|}\frac{|J|!}{k!(|J|-k)!}$ and hence, $R^{LP}_1 \leq R^{LP}$.
\end{proof}

{Despite the fact that Theorem~\ref{th:sizerel2} refers to a specific case with only one monomial, the intuition behind the result hints that this result may happen often in general. In fact, the computational experiments in Appendix~\ref{appendix:results} confirm that $N^{LP}_1 \leq N^{LP}$ and $R^{LP}_1 \leq R^{LP}$ occurs frequently (98\% of the tested cases).}

{Given the results of~\cite{Dalkiran2018} and Theorems~\ref{th:sizerel}--\ref{th:sizerel2}, we conclude the following}. If we want good lower bounds along the branch-and-bound tree, the best strategy is to use the standard RLT with $J$-sets, since the linear relaxation~\eqref{eq:LP} is tighter. However, in order to have manageable relaxations that can be solved faster than standard RLT with $J$-sets, Scheme 1 with $J$-sets appears to be the better approach. In fact, the computational results shown in Section~\ref{sec:results} show that {this trade-off between the quality of the bounds and the computational complexity of the relaxations depends on the degree of the problem, with Scheme 1 being favored when the degree of the problem is higher.}

In the following section, we present a new quadrification scheme that outperforms the ones discussed in \cite{Dalkiran2018}, particularly for high degree polynomial problems.

\section{A new quadrification scheme}\label{sec:newapproach}
In this section, we present a new quadrification scheme which we refer to as QUAD-RLT. The main objective of this new scheme is to keep the good properties of Scheme 1 in terms of the size of the linear relaxation but take into account how the RLT relaxation is built.

First, we introduce the following notation. Given two multisets $J$ and $J^\prime$ such that $J^\prime\subseteq J$, we denote by $J^\prime_J$ the complementary multiset, that is, the multiset with the elements in $J$ after removing the ones in $J^\prime$. As an example to clarify, if $J=\{1,1,1,2,3,4,4,4\}$ and $J^\prime=\{1,2,4,4\}$, then $J^\prime_J=\{1,1,3,4\}$. Next, we present the new quadrification scheme QUAD-RLT which can be summarized with the following steps:
\begin{itemize}
    \item Step 1. Consider all the monomials of degree greater than 2 in~\eqref{eq:PP} and sort them from highest to lowest degree. We denote this ordered set by $H$. We define another set $G$ with all the monomials of degree $2$ in~\eqref{eq:PP}.
    \item Step 2. Select and delete the first monomial $\prod_{j\in J}x_j$ from $H$.
    \item Step 3. Select from $H\cup G$ the monomial $\prod_{j\in J^\prime}x_j$ with the highest degree such that $J^\prime\subset J$. If there exists such a monomial, go to Step 4a. Otherwise, go to Step 4b.
    \item Step 4a. First, replace the monomial $\prod_{j\in J}x_j$ with a new variable $X_J$. Given the complementary multiset~$J^\prime_J$, we denote by $J^\prime_J[i]$ the 
$i$-th element of $J^\prime_J$ and by $J^\prime_J[1:i]$ the first $i$ elements of the multiset $J^\prime_J$ (if $i=0$, then $J^\prime_J[1:i]$ is the empty set). Then, for each $k\in\{|J^\prime_J|,\ldots,1\}$  add the following quadratic constraint to~\eqref{eq:PP}
    \begin{equation}
        \XQ_{J^\prime,J^\prime_J[1:k]}=\XQ_{J^\prime,J^\prime_J[1:k-1]}\cdot x_{J^\prime_J[k]}
    \end{equation}
and go to Step 5. 
    \item Step 4b. Replace the monomial $\prod_{j\in J}x_j$ with a new variable $\XQ_J$ and add the set of quadratic constraints~\eqref{eq:scheme1}. Then go to Step 5.
    
    \item Step 5. If $H$ is not empty, go to Step 2; otherwise, stop.
\end{itemize}

Let  \LPQR~denote the relaxation resulting from applying QUAD-RLT. Next, we present Example~\ref{ex:quadrlt} to illustrate the QUAD-RLT scheme and particularly highlight the differences between the resulting relaxation \LPQR~compared to the relaxation resulting from applying Scheme~1. 
\begin{example}\label{ex:quadrlt}
Consider the following polynomial programming problem~\eqref{eq:PP5}
\begin{equation}
\begin{array}{rll}
\text{minimize} & x_1x_3 -x_1x_2x_3 &\\
\text{subject to}  &  0\leq l_j\leq x_j\leq u_j, &  r=1,2,3 .\\
\end{array}
\tag{Ex$_4$}
\label{eq:PP5}
\end{equation}
The corresponding relaxation \LPQR~after applying QUAD-RLT and the relaxation~\eqref{eq:LPsch1} after applying Scheme~1 are presented next and are denoted by by~\eqref{eq:LP5QR} and~\eqref{eq:LP5_1}, respectively.

\begin{multicols}{2}
\noindent
\small
\begin{equation}
\begin{array}{rll}
\text{minimize} & \XRLT_{13} - \XQ_{123} &\\
\text{subject to} & \XQ_{123}=[\XQ_{13}x_2]_L \\
& \XQ_{13} = \XRLT_{13} \\
& [(\XQ_{13}-l_1l_3)(x_2-l_2)]_L \geq 0 \\
& [(\XQ_{13}-l_1l_3)(u_2-x_2)]_L \geq 0 \\
& [(u_1u_3-\XQ_{13})(x_2-l_2)]_L \geq 0 \\
& [(u_1u_3-\XQ_{13})(u_2-x_2)]_L \geq 0 \\
& [(x_1-l_1)(x_3-l_3)]_L \geq 0 \\
& [(x_1-l_1)(u_3-x_3)]_L \geq 0 \\
& [(u_1-x_1)(x_3-l_3)]_L \geq 0 \\
& [(u_1-x_1)(u_3-x_3)]_L \geq 0. \\
\end{array}
\tag{Ex$_4^{\text{LP}_\text{QR}}$}
\label{eq:LP5QR}
\end{equation}
\begin{equation}
\begin{array}{rll}
\text{minimize} & \XRLT_{13} - \XQ_{123} &\\
\text{subject to} & \XQ_{123}=[\XQ_{12}x_3]_L \\
& \XQ_{12} = \XRLT_{12} \\
& [(\XQ_{12}-l_1l_2)(x_3-l_3)]_L \geq 0 \\
& [(\XQ_{12}-l_1l_2)(u_3-x_3)]_L \geq 0 \\
& [(u_1u_2-\XQ_{12})(x_3-l_3)]_L \geq 0 \\
& [(u_1u_2-\XQ_{12})(u_3-x_3)]_L \geq 0 \\
& [(x_1-l_1)(x_2-l_2)]_L \geq 0 \\
& [(x_1-l_1)(u_2-x_2)]_L \geq 0 \\
& [(u_1-x_1)(x_2-l_2)]_L \geq 0 \\
& [(u_1-x_1)(u_2-x_2)]_L \geq 0 \\
& [(x_1-l_1)(x_3-l_3)]_L \geq 0 \\
& [(x_1-l_1)(u_3-x_3)]_L \geq 0 \\
& [(u_1-x_1)(x_3-l_3)]_L \geq 0 \\
& [(u_1-x_1)(u_3-x_3)]_L \geq 0. \\
\end{array}
\tag{Ex$_4^{\text{LP}_1}$}
\label{eq:LP5_1}
\end{equation}
\end{multicols}
\end{example}

\eqref{eq:LP5QR} in Example~\ref{ex:quadrlt} has less constraints and less variables than \eqref{eq:LP5_1} (4~constraints and 1~variable less). Theorem~\ref{th:quadrlt} generalizes this to any polynomial programming problem.

\begin{theorem}\label{th:quadrlt}
Let $N^{LP}_{QR}$ and $R^{LP}_{QR}$ be the number of variables and constraints, respectively, in \LPQR. It holds that $N^{LP}_{QR}\leq N^{LP}_{1}$ and $R^{LP}_{QR}\leq R^{LP}_{1}$.
\end{theorem}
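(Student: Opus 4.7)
The plan is to compare Scheme~1 and QUAD-RLT monomial by monomial, processing the elements of $H$ in the same order in both schemes (highest degree first), and to show that QUAD-RLT introduces no more Q-variables and no more constraints than Scheme~1 does for each monomial of $H$.

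First I would fix a monomial $\prod_{j\in J} x_j$ of degree $d = |J| > 2$ and analyze what each scheme contributes. Scheme~1 applies \eqref{eq:scheme1} and produces the chain $\XQ_{j_1 j_2}, \XQ_{j_1 j_2 j_3}, \ldots, \XQ_J$; this amounts to $d-1$ new Q-variables, $d-1$ bilinear equality constraints (together with their linearization variables), and $d-1$ new quadratic $J$-sets in $\mathbb{J}_{Q1}$, each of which generates $4$ bound-factor constraints. So Scheme~1 contributes $d-1$ new Q-variables and $5(d-1)$ new constraints for $J$. If QUAD-RLT invokes Step 4b (no submonomial of $J$ lies in $H\cup G$) the scheme coincides with \eqref{eq:scheme1} and the contributions match. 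If QUAD-RLT invokes Step 4a with submonomial $J'\subset J$ of degree $d'\geq 2$, the chain is built from the reused variable $\XQ_{J'}$ by appending the elements of $J'_J$ one at a time, contributing only $d-d'$ new Q-variables $\XQ_{J'\cup J'_J[1:k]}$, $d-d'$ equality constraints, and $d-d'$ new quadratic $J$-sets yielding $4(d-d')$ bound-factor constraints. Since $d'\geq 2$ gives $d-d' \leq d-2 < d-1$, QUAD-RLT saves at least one variable and five constraints per Step 4a monomial.

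To lift these per-monomial comparisons to the global counts, I would align Scheme~1's chain with QUAD-RLT's by ordering the factors in \eqref{eq:scheme1} so that the elements of $J'$ appear first, followed by the elements of $J'_J$ in the order used by QUAD-RLT. Under this alignment QUAD-RLT's chain is a tail of Scheme~1's chain, so the Q-variables, equality constraints, and bound-factor constraints added by QUAD-RLT for $J$ form a subset of those added by Scheme~1 for $J$. The reused variable $\XQ_{J'}$ is created identically in both schemes (through the separate processing of $J'\in H$ or as the quadratic linearization when $J'\in G$), so cross-monomial sharing of intermediates is preserved. Taking the union over $J\in H$ then gives $N^{LP}_{QR}\leq N^{LP}_{1}$ and $R^{LP}_{QR}\leq R^{LP}_{1}$. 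The hard part is the bookkeeping under this alignment: the per-monomial subset inclusion must still hold after deduplication across monomials, which reduces to verifying injectivity of the matched-ordering embedding on each type of object (Q-variables, linearization variables, bilinear equalities, and bound-factor constraints).
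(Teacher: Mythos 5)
Your core argument is the paper's argument: the two schemes differ only at Step~4a, where QUAD-RLT adds $|J^\prime_J|=|J|-|J^\prime|\leq|J|-2$ chain constraints instead of Scheme~1's $|J|-1$, and each saved chain constraint saves one $\XQ$ variable, one quadratic term (hence one RLT linearization variable) and four bound-factor constraints. The paper's proof consists of exactly this per-monomial count and nothing else; it never discusses chain orderings, alignment, or deduplication across monomials.

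The layer you add on top --- realigning Scheme~1's chain for each $J$ so the elements of $J^\prime$ come first, and then checking that the per-monomial subset inclusions survive deduplication --- is the part you yourself flag as unfinished, and it is precisely where the statement is delicate. Take the problem with monomials $x_1x_2x_3x_4$, $x_1x_2x_3x_5$ and $x_3x_4$. Scheme~1 with the natural index ordering builds the chains $\XQ_{12},\XQ_{123},\XQ_{1234}$ and $\XQ_{12},\XQ_{123},\XQ_{1235}$, which share two links: $4$ distinct chain constraints, $4$ Q-variables, $5$ quadratic $J$-sets. QUAD-RLT reuses $\{3,4\}$ for the first monomial (chain $\XQ_{134},\XQ_{1234}$) and falls back to Step~4b for the second (chain $\XQ_{12},\XQ_{123},\XQ_{1235}$): $5$ chain constraints, $5$ Q-variables, $6$ quadratic $J$-sets --- strictly more than Scheme~1 in both variables and constraints. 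Your realignment repairs this only by forcing Scheme~1 to order the first monomial as $3,4,1,2$, i.e.\ you end up comparing QUAD-RLT against a reordered Scheme~1 rather than the one a fixed canonical ordering actually generates; the ``injectivity after deduplication'' you defer is exactly the step that fails here. The paper's proof avoids confronting this only by implicitly treating the objects generated for distinct monomials as distinct (no cross-monomial sharing), under which reading both your count and its count go through. So: same approach as the paper, but the bookkeeping you honestly identify as the hard part is a genuine gap, not a routine verification.
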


\begin{proof}
Regarding the number of constraints in the relaxation, we have two types of constraints, the constraints used for quadrifying and the bound-factor constraints. For the constraints used for quadrifying, it is clear that QUAD-RLT adds less constraints than Scheme~1 since they differ in Step~4a in which QUAD-RLT adds $|J^\prime_J|$ constraints instead of $|J|-1$. Regarding the bound-factor constraints, since QUAD-RLT adds less constraints for quadrifying, it also adds less quadratic terms and hence, less bound-factor constraints. Thus, $R^{LP}_{QR}\leq R^{LP}_{1}$.

Finally, since QUAD-RLT adds less constraints for quadrifying, it also has less $\XQ$ variables and less quadratic terms, hence less RLT variables. Thus, $N^{LP}_{QR}\leq N^{LP}_{1}$.
\end{proof}

Theorem~\ref{th:quadrlt} shows that the relaxations created with QUAD-RLT are smaller (less variables and constraints) than those created by Scheme~1. Generally, this leads to relaxations that are solved faster. Regarding the tightness of the relaxation, it would have been ideal if QUAD-RLT also provided a tighter relaxation than Scheme~1 however Example~\ref{ex:quadtigh} presented next shows that the resulting relaxation \LPQR~may or may not be tighter than \eqref{eq:LPsch1}. 

\begin{example}\label{ex:quadtigh}
Consider the following polynomial programming problem \eqref{eq:PP6}
\begin{equation}
\begin{array}{rll}
\text{minimize} & x_1x_2x_3x_4 -10x_1x_2-x_1x_3x_4 &\\
\text{subject to}  &  1\leq x_1\leq 2, 9\leq x_2\leq 10, 1\leq x_3\leq 2, 9\leq x_4\leq 10. & \\
\end{array}
\tag{Ex$_5$}
\label{eq:PP6}
\end{equation}
The corresponding relaxation \LPQR~after applying QUAD-RLT and the relaxation~\eqref{eq:LPsch1} after applying Scheme~1 are presented next and are denoted by~\eqref{eq:LP6_QR} and~\eqref{eq:LP6_1}, respectively.
\begin{multicols}{2}
\noindent
\small
\begin{equation}
\begin{array}{rll}
\text{minimize} & \XQ_{1234} -10 \XRLT_{12}-\XQ_{134} &\\
\text{subject to} & \XQ_{1234}=[\XQ_{134}x_2]_L \\
& \XQ_{134}=[\XQ_{13}x_4]_L \\
& \XQ_{13} = \XRLT_{13} \\
& [(\XQ_{134}-9)(x_2-9)]_L \geq 0 \\
& [(\XQ_{134}-9)(10-x_2)]_L \geq 0 \\
& [40-\XQ_{134})(x_2-9)]_L \geq 0 \\
& [(40-\XQ_{134})(10-x_2)]_L \geq 0 \\
& [(\XQ_{13}-1)(x_4-9)]_L \geq 0 \\
& [(\XQ_{13}-1)(10-x_4)]_L \geq 0 \\
& [(4-\XQ_{13})(x_4-9)]_L \geq 0 \\
& [(4-\XQ_{13})(10-x_4)]_L \geq 0 \\
& [(x_1-1)(x_2-9)]_L \geq 0 \\
& [(x_1-1)(10-x_2)]_L \geq 0 \\
& [(2-x_1)(x_2-9)]_L \geq 0 \\
& [(2-x_1)(10-x_2)]_L \geq 0 \\
& [(x_1-1)(x_3-1)]_L \geq 0 \\
& [(x_1-1)(2-x_3)]_L \geq 0 \\
& [(2-x_1)(x_3-1)]_L \geq 0 \\
& [(2-x_1)(2-x_3)]_L \geq 0. \\
\end{array}
\tag{Ex$_5^{\text{LP}_{\text{QR}}}$}
\label{eq:LP6_QR}
\end{equation}
\begin{equation}
\begin{array}{rll}
\text{minimize} & \XQ_{1234} -10 \XRLT_{12}-\XQ_{134} &\\
\text{subject to} & \XQ_{1234}=[\XQ_{123}x_4]_L \\
& \XQ_{123}=[\XQ_{12}x_3]_L \\
& \XQ_{12} = \XRLT_{12} \\
& \XQ_{134}=[\XQ_{13}x_4]_L \\
& \XQ_{13} = \XRLT_{13} \\
& [(\XQ_{123}-9)(x_4-9)]_L \geq 0 \\
& [(\XQ_{123}-9)(10-x_4)]_L \geq 0 \\
& [40-\XQ_{123})(x_4-9)]_L \geq 0 \\
& [(40-\XQ_{123})(10-x_4)]_L \geq 0 \\
& [(\XQ_{12}-9)(x_3-1)]_L \geq 0 \\
& [(\XQ_{12}-9)(2-x_3)]_L \geq 0 \\
& [(20-\XQ_{12})(x_3-1)]_L \geq 0 \\
& [(20-\XQ_{12})(2-x_3)]_L \geq 0 \\
& [(\XQ_{13}-1)(x_4-9)]_L \geq 0 \\
& [(\XQ_{13}-1)(10-x_4)]_L \geq 0 \\
& [(4-\XQ_{13})(x_4-9)]_L \geq 0 \\
& [(4-\XQ_{13})(10-x_4)]_L \geq 0 \\
& [(x_1-1)(x_2-9)]_L \geq 0 \\
& [(x_1-1)(10-x_2)]_L \geq 0 \\
& [(2-x_1)(x_2-9)]_L \geq 0 \\
& [(2-x_1)(10-x_2)]_L \geq 0 \\
& [(x_1-1)(x_3-1)]_L \geq 0 \\
& [(x_1-1)(2-x_3)]_L \geq 0 \\
& [(2-x_1)(x_3-1)]_L \geq 0 \\
& [(2-x_1)(2-x_3)]_L \geq 0. \\
\end{array}
\tag{Ex$_5^{\text{LP}_1}$}
\label{eq:LP6_1}
\end{equation}
\end{multicols}
\noindent Solving the linear programs \eqref{eq:LP6_QR} and \eqref{eq:LP6_1}, we get the optimal solutions $-43.81$ and $-38$, respectively and thus \eqref{eq:LP6_1} is tighter than \eqref{eq:LP6_QR}. Next, consider the polynomial program \eqref{eq:PP5} with $l_1=l_2=l_3=0$ and $u_1=u_2=u_3=1$. The optimal solutions of~\eqref{eq:LP5QR}  and ~\eqref{eq:LP5_1} are $0$ and $-0.5$, respectively, and thus, in this case \eqref{eq:LP5QR} is tighter than \eqref{eq:LP5_1}.
\end{example}

Example~\ref{ex:quadtigh} shows that there is no particular relation between the tightness of \LPQR ~and \eqref{eq:LPsch1}, for a given polynomial optimization problem~\eqref{eq:PP}. However, in the construction of Example~\ref{ex:quadtigh}, we found it difficult to find a problem in which \eqref{eq:LPsch1} is tighter than \LPQR. In fact, the results provided next in Section~\ref{sec:results}, show that for randomly generated problems, in the majority of cases \LPQR~is tighter than \eqref{eq:LPsch1} ({96\%} of the tested cases 
in the computational results).



\subsection{Computational results}\label{sec:results}
This section presents a detailed computational analysis of the different quadrification schemes summarized in Section~\ref{sec:quad} and the newly introduced quadrification scheme QUAD-RLT presented in Section~\ref{sec:newapproach}. For that, we have implemented all the schemes in RAPOSa~\citep{González-Rodríguez2023}, a solver for solving polynomial programming problems based on the RLT technique discussed in Section~\ref{sec:rlt}. RAPOSa is freely available on \url{https://raposa.usc.es} and all the quadrification schemes discussed in this paper including QUAD-RLT are now available in RAPOSa starting version {4.1.0}.

To generate the test instances for the computational analysis, we used all the instances with degree~2 from \cite{Dalkiran2016} 
(30 instances in total with different densities) and {we increase their degree up to a degree $\delta$, with more or less monomials depending on a given parameter $k$, according to the following:}
 
{
\begin{itemize}
    \item For each $i\in\{2,\ldots,\delta\}$, create a random multiset $J$, with $|J|=i$, containing indices of variables present in the problem.
    \item For each random multiset $J$ created in the previous step, add the corresponding monomial $\prod_{j\in J}x_j$ to the objective function, with a random coefficient between $-10$ and $10$.
    \item Repeat the previous steps $k$ times.
\end{itemize}
}

The resulting instances are split into $8$ different sets with $30$ instances in each one where each set is characterised by a degree $\delta\in\{5,10,15,20\}$ and {parameter} $k\in\{1,10\}$. 
The computations are performed on the supercomputer Finisterrae~III, provided by Galicia Supercomputing Centre (CESGA) using a 32 cores Intel Xeon Ice Lake 8352Y CPU, 256GB of RAM, and a 1TB SSD. The stopping criteria for RAPOSa are a  time limit of 1 hour and a relative gap of $1\mathrm{e}{-3}$.

\begin{table}{\scriptsize
\begin{subtable}[t]{0.49\textwidth}
\centering
\begin{tabular}{|l|rrrrr|}
\toprule
    & Bas. & Sch. 1 & Sch. 2 & Sch. 3 & Q-RLT \\
\midrule
gmean gap & 0.002 & 0.003 & 0.003 & NA & 0.003\\
gmean time & 12.9 & 30.4 & 23.7 & 3600.0 & 20.78 \\
solved & 23 & 21 & 21 & 0 & 21 \\
\bottomrule
\end{tabular}
\caption{$\delta=5$, $k=1$}
\label{table:quadd5k1}
\end{subtable}
\hspace{\fill}
\begin{subtable}[t]{0.49\textwidth}
\centering
\begin{tabular}{|l|rrrrr|}
\toprule
    & Bas. & Sch. 1 & Sch. 2 & Sch. 3 & Q-RLT \\
\midrule
gmean gap & 0.003 & 0.018 & 0.005 & NA & 0.012 \\
gmean time & 81.1 & 774.2 & 340.9 & 3600.0 & 574.0\\
solved & 22 & 8 & 18 & 0 & 10 \\
\bottomrule
\end{tabular}
\caption{$\delta=5$, $k=10$}
\label{table:quadd5k10}
\end{subtable}

\vspace{0.5cm}
\begin{subtable}[t]{0.49\textwidth}
\centering
\begin{tabular}{|l|rrrrr|}
\toprule
    & Bas. & Sch. 1 & Sch. 2 & Sch. 3 & Q-RLT \\
\midrule
gmean gap & 0.016 & 0.071 & 0.015 & NA & 0.019 \\
gmean time & 611.4 & 406.4 & 1464.2 & 3600.0 & 301.4 \\
solved & 11 & 8 & 12 & 0 & 10 \\
\bottomrule
\end{tabular}
\caption{$\delta=10$, $k=1$}
\label{table:quadd10k1}
\end{subtable}
\hspace{\fill}
\begin{subtable}[t]{0.49\textwidth}
\centering
\begin{tabular}{|l|rrrrr|}
\toprule
    & Bas. & Sch. 1 & Sch. 2 & Sch. 3 & Q-RLT \\
\midrule
gmean gap & 0.297 & 0.260 & NA & NA & 0.090 \\
gmean time & 2373.6 & 1655.4 & 3600.0 & 3600.0 & 796.8 \\
solved & 6 & 4 & 0 & 0 & 7 \\
\bottomrule
\end{tabular}
\caption{$\delta=10$, $k=10$}
\label{table:quadd10k10}
\end{subtable}

\vspace{0.5cm}
\begin{subtable}[t]{0.49\textwidth}
\centering
\begin{tabular}{|l|rrrrr|}
\toprule
    & Bas. & Sch. 1 & Sch. 2 & Sch. 3 & Q-RLT \\
\midrule
gmean gap & NA & 3.000 & NA & NA & 0.488 \\
gmean time & 3600.0 & 1855.8 & 3600.0 & 3600.0 & 675.9 \\
solved & 0 & 2 & 0 & 0 & 6 \\
\bottomrule
\end{tabular}
\caption{$\delta=15$, $k=1$}
\label{table:quadd15k1}
\end{subtable}
\hspace{\fill}
\begin{subtable}[t]{0.49\textwidth}
\centering
\begin{tabular}{|l|rrrrr|}
\toprule
    & Bas. & Sch. 1 & Sch. 2 & Sch. 3 & Q-RLT \\
\midrule
gmean gap & NA & 1.316 & NA & NA & 0.287 \\
gmean time & 3600.0 & 3600.0 & 3600.0 & 3600.0 & 2054.5 \\
solved & 0 & 0 & 0 & 0 & 5 \\
\bottomrule
\end{tabular}
\caption{$\delta=15$, $k=10$}
\label{table:quadd15k10}
\end{subtable}

\vspace{0.5cm}
\begin{subtable}[t]{0.49\textwidth}
\centering
\begin{tabular}{|l|rrrrr|}
\toprule
    & Bas. & Sch. 1 & Sch. 2 & Sch. 3 & Q-RLT \\
\midrule
gmean gap & NA & 8.219 & NA & NA & 1.069 \\
gmean time & 3600.0 & 1072.9 & 3600.0 & 3600.0 & 416.2 \\
solved & 0 & 4 & 0 & 0 & 7 \\
\bottomrule
\end{tabular}
\caption{$\delta=20$, $k=1$}
\label{table:quadd20k1}
\end{subtable}
\hspace{\fill}
\begin{subtable}[t]{0.49\textwidth}
\centering
\begin{tabular}{|l|rrrrr|}
\toprule
    & Bas. & Sch. 1 & Sch. 2 & Sch. 3 & Q-RLT \\
\midrule
gmean gap & NA & 6.606 & NA & NA & 1.485 \\
gmean time & 3600.0 & 3600.0 & 3600.0 & 3600.0 & 2166.7 \\
solved & 0 & 0 & 0 & 0 & 3 \\
\bottomrule
\end{tabular}
\caption{$\delta=20$, $k=10$}
\label{table:quadd20k10}
\end{subtable}
\caption{Quadrification results for different $\delta$ and $k$.}
\label{table:quad}
}\end{table}

Table~\ref{table:quad} summarizes all the results for each value of $\delta$ and $k$. The results show the geometric mean of the optimality gap, the geometric mean of the running time (in seconds), and the number of solved instances within the time limit. The gap is computed as (upper bound - lower bound)/|upper bound|. If an instance is not solved within the time limit, the time that is included in the computation of the mean is 3600 seconds. {It is important to highlight that using the geometric mean makes the differences between the schemes seem smaller, but it is the standard when comparing the performance of different algorithms.} 
The following setups are tested:
\begin{itemize}
    \item Bas.: is the Baseline which uses the RLT with $J$-sets to solve the original polynomial program~\eqref{eq:PP}.
    \item Sch. 1: applies Scheme 1 to quadrify~\eqref{eq:PP} and solves the resulting problem with RLT and $J$-sets.
    \item Sch. 2: applies Scheme 2 to quadrify~\eqref{eq:PP} and solves the resulting problem with RLT and $J$-sets.
    \item Sch. 3: applies Scheme 3 to quadrify~\eqref{eq:PP} and solves the resulting problem with RLT and $J$-sets.
    \item Q-RLT: applies QUAD-RLT to quadrify~\eqref{eq:PP} and solves the resulting problem with RLT and $J$-sets.
\end{itemize}

As discussed  in Theorem~\ref{th:sizerel}, the linear relaxations \eqref{eq:LPsch3} created with Scheme 3 are too large and computationally challenging to solve. As seen in Table~\ref{table:quad}, even the first linear relaxation could not be solved for any of the instances within the 1~hour time limit using Scheme~3. 
Similarly for Scheme 2, the problems become challenging to solve as the degree of the polynomial problem becomes higher (Tables~\ref{table:quadd10k10}--\ref{table:quadd20k10}). 
The Baseline version eventually failed to solve the first relaxation for all the instances in Tables~\ref{table:quadd15k1}--\ref{table:quadd20k10}.

For problems with a low degree polynomials, i.e. $\delta=5$ (Tables~\ref{table:quadd5k1} and \ref{table:quadd5k10}), the Baseline version achieves the best performance compared to all the quadrification schemes in terms of mean gap, mean solution time, and the number of problems solved within the time limit. However, as the degree of the polynomial increases to~10 (Tables~\ref{table:quadd10k1} and \ref{table:quadd10k10}), the newly proposed approach QUAD-RLT starts to outperform the other quadrification schemes and the Baseline. In particular, for the case where $k=1$, Table~\ref{table:quadd10k1} shows that QUAD-RLT outperforms all the quadrification schemes in terms of computational time (25\% less computational time compared to Scheme 1 which is the second fastest). In terms of gap, both the Baseline and Scheme 2 reach a better average gap and solve more instances (11 and 12 for the Baseline and Scheme 2 respectively, compared to 10 solved with QUAD-RLT). The advantage of QUAD-RLT becomes more evident as $k$ is increased to~10 (Table~\ref{table:quadd10k10}). In that case, QUAD-RLT achieves better results for all the metrics compared to the Baseline and the other quadrification schemes. Eventually as the degree of polynomial is increased to 15 and 20 (Tables~\ref{table:quadd15k1}--\ref{table:quadd20k10}), QUAD-RLT continues to be the best approach and for the hardest instances with $\delta=20$ and $k=10$, QUAD-RLT is still able to solve 3 instances in an average of 2166.7 seconds while the other approaches failed to solve any of the instances within the time limit.


In summary, the results in Table~\ref{table:quad} show that for low degree polynomials, using the Baseline scheme is the better approach compared to the other quadrification schemes, however as the degree of the polynomial increases, QUAD-RLT becomes the better quadrification scheme that outperforms the Baseline and the other schemes that have been previously introduced in the literature. Besides the computational results presented in Table~\ref{table:quad}, in Appendix~\ref{appendix:results}, we present results evaluating the theoretical tightness and problem size guarantees discussed in Section~\ref{sec:quad} and Section~\ref{sec:newapproach}.


\section{Degree reduction}\label{sec:degreered}

The results in Section~\ref{sec:results} showed that for problems with degree 5 (Tables~\ref{table:quadd5k1} and \ref{table:quadd5k10}), the Baseline approach achieves the best results. This observation also aligns with the results shown in \cite{Dalkiran2018} where for low degree polynomials (lower than 7), solving directly the polynomial programming problem using $J$-sets, i.e. the Baseline approach without quadrification, achieves the best performance. Thus, instead of reducing the degree of the polynomial to 2, i.e. quadrification, it might be best to reduce the degree of the polynomial to an intermediate degree that is higher than 2 and then solving the resulting problem. This section thus evaluates the use of QUAD-RLT and Schemes 1--3 for degree reduction rather than quadrification.

In the following, we generalize Schemes 1--3 and QUAD-RLT to reduce the degree of~\eqref{eq:PP} to a certain degree $d\geq 2$. Given a polynomial $\phi(\boldsymbol{x})$ and a degree $d\geq 2$, we denote its ``reduction to degree~$d$'' by $[\phi(\boldsymbol{x})]_{D_d}$ which is the result of replacing every monomial of degree greater than $d$ present in $\phi(\boldsymbol{x})$ by the corresponding variable $\XQ_J$. {Moreover, given a multiset $J=\{j_1,\ldots,j_{|J|}\}$ and a degree $d\geq2$, we define the following set of constraints~$C_J$:
\begin{itemize}
\item Step 1: Let $J^\prime=J$.
\item Step 2a: Add to $C_J$ the constraint $\XQ_{J^\prime}=[\XQ_{j^\prime_1,\ldots j^\prime_{|J^\prime|-d+1}}x_{j^\prime_{|J^\prime|-d+2}}\cdots x_{j^\prime_{|J^\prime|}}]_L$. Update $J^\prime$ by deleting elements $j^\prime_{|J^\prime|-d+2}, \ldots j^\prime_{|J^\prime|}$, and go to Step 3.
\item Step 2b: Add to $C_J$ the constraint $\XQ_{J^\prime}=[x_{j^\prime_1}\cdots x_{j^\prime_{|J^\prime|}}]_L$ and stop.
\item Step 3. If $|J^\prime| > d$, go to step 2a. Otherwise, go to step 2b.
\end{itemize}
}
\noindent For a given degree $d\geq 2$, the corresponding linear relaxation generated by the RLT after applying each of the schemes to reduce the degree of~\eqref{eq:PP} to $d$ is as follows.

\begin{enumerate}
\item {\bfseries Scheme~1:}
\begin{equation}
\hspace{-0.3cm}
\begin{array}{rll}
\text{minimize} &  [[\phi_0(\boldsymbol{x})]_{D_d}]_L &\\
\text{subject to}  &  [[\phi_r(\boldsymbol{x})]_{D_d}]_L\geq \beta_r, & 
r=1,\ldots, R_1 \\
&  [[\phi_r(\boldsymbol{x})]_{D_d}]_L=\beta_r, & r=R_1+1,\ldots,R\\
&  {C_J}, & \text{for each }|J| > d, \text{ such that } \prod_{j\in J}x_j \text{ appears in~\eqref{eq:PP}}\\

&  [F_{\delta}(J_1,J_2)]_L \geq 0,  & J_1\cup J_2 \in \mathbb{J}_{D_{d}1}\\
&  \boldsymbol{x}\in\Omega \subset \mathbb{R}^n &
\end{array}
\tag{LP$_{1,d}$}
\label{eq:LPdsch1}
\end{equation}
where $\mathbb{J}_{D_{d}1}$ is the set with all the $J$-sets of the problem obtained after reducing the degree of \eqref{eq:PP} to $d$ using Scheme~1. Thus, we consider the multisets associated with all the terms with degree greater or equal to $2$ and lower or equal to $d$, present in~\eqref{eq:PP}, together with the multisets associated with the monomials that appear in $C_J$ for each multiset $J$ such that $|J| > d$ and $\prod_{j\in J}x_j$ appearing in~\eqref{eq:PP}. 
Note, that \eqref{eq:LPdsch1} is equivalent to \eqref{eq:LPsch1} when $d=2$.

\item {\bfseries Scheme~2:}
\begin{equation}
\begin{array}{rll}
\text{minimize} &  [[\phi_0(\boldsymbol{x})]_{D_d}]_L &\\
\text{subject to}  &  [[\phi_r(\boldsymbol{x})]_{D_d}]_L\geq \beta_r, & 
r=1,\ldots, R_1 \\
&  [[\phi_r(\boldsymbol{x})]_{D_d}]_L=\beta_r, & r=R_1+1,\ldots,R\\
& \XQ_{J^\prime} = [\prod_{d^\prime=1}^{d} \XQ_{J_{d^\prime}}]_L & \bigcup_{d^\prime=1}^{d}J_{d^\prime}=J^\prime\subseteq J \text{ for each }J\in \mathbb{J} \text{ such that } |J|>d \\
&  [F_{\delta}(J_1,J_2)]_L \geq 0,  & J_1\cup J_2 \in \mathbb{J}_{D_{d}2}\\
&  \boldsymbol{x}\in\Omega \subset \mathbb{R}^n &
\end{array}
\tag{LP$_{2,d}$}
\label{eq:LPdsch2}
\end{equation}
where $\mathbb{J}_{D_{d}2}$ is the set with all the $J$-sets of the problem obtained after reducing the degree of \eqref{eq:PP} to~$d$ using Scheme~2. Thus, we consider the multisets associated with all the terms with degree greater or equal to $2$ and lower or equal to $d$, present in~\eqref{eq:PP}, together with the multisets $\{J_{d^\prime}\}_{d^\prime=1}^d$, for each $\bigcup_{d^\prime=1}^{d}J_{d^\prime}\subseteq J$,  with $J\in\mathbb{J}$ and $|J|>d$.
Note, that \eqref{eq:LPdsch2} is equivalent to \eqref{eq:LPsch2} when $d=2$.

\item {\bfseries Scheme~3:}
\begin{equation}
\begin{array}{rll}
\text{minimize} &  [[\phi_0(\boldsymbol{x})]_{D_d}]_L &\\
\text{subject to}  &  [[\phi_r(\boldsymbol{x})]_{D_d}]_L\geq \beta_r, & 
r=1,\ldots, R_1 \\
&  [[\phi_r(\boldsymbol{x})]_{D_d}]_L=\beta_r, & r=R_1+1,\ldots,R\\
& \XQ_{J^\prime} = [\prod_{d^\prime=1}^{d} \XQ_{J_{d^\prime}}]_L & \bigcup_{d^\prime=1}^{d}J_{d^\prime}\in \bigcup_{d^\prime=d}^\delta N^{d^\prime}  \\
&  [F_{\delta}(J_1,J_2)]_L \geq 0,  & J_1\cup J_2 \in \mathbb{J}_{D_{d}3}\\
&  \boldsymbol{x}\in\Omega \subset \mathbb{R}^n &
\end{array}
\tag{LP$_{3,d}$}
\label{eq:LPdsch3}
\end{equation}
where $\mathbb{J}_{D_{d}3}$ is the set with all the $J$-sets of the problem obtained after reducing the degree of \eqref{eq:PP} to~$d$ using Scheme~3. Thus, we consider the multisets associated with all the terms with degree greater or equal to $2$ and lower or equal to $d$, present in~\eqref{eq:PP}, together with the multisets $\{J_{d^\prime}\}_{d^\prime=1}^d$, for each  $\bigcup_{d^\prime=1}^{d}J_{d^\prime} \in\bigcup_{d^\prime=d}^\delta N^{d^\prime}$. 
Note, that \eqref{eq:LPdsch3} is equivalent to \eqref{eq:LPsch3} when $d=2$.

\item {\bfseries QUAD-RLT:} QUAD-RLT is applied to reduce the degree of a polynomial to degree $d$ as follows:
\begin{itemize}
    \item Step 1. Consider all the monomials of degree greater than $d$ present in~\eqref{eq:PP} and sort them from highest to lowest degree. We denote this ordered set by $H$. Define another set $G$ with all the monomials in~\eqref{eq:PP} of degree lower or equal to $d$ and greater or equal to 2.
    \item Step 2. Select and delete the first monomial $\prod_{j\in J}x_j$ from $H$.
    \item Step 3. Select from $H\cup G$ the monomial with the highest degree such that $J^\prime\subset J$. If there exists such a monomial, go to Step 4a. Otherwise, go to Step 4b.
    {

\item Step 4a. First, replace the monomial $\prod_{j\in J}x_j$ with a new variable $\XQ_J$. Next consider the complementary multiset $J^\prime_J$ and do the following:

\begin{itemize}
\item Step i: Define $J^{\prime\prime}=J^\prime\cup J^\prime_J$. If $|J^\prime_J| \geq d$ go to Step ii.a. Otherwise go to Step ii.b.
\item Step ii.a: Add the constraint $\XQ_{J^{\prime\prime}}=[\XQ_{j^{\prime\prime}_1,\ldots j^{\prime\prime}_{|J^{\prime\prime}|-d+1}}x_{j^{\prime\prime}_{|J^{\prime\prime}|-d+2}}\cdots x_{j^{\prime\prime}_{|J^{\prime\prime}|}}]_L$, update $J^{\prime\prime}$ by deleting elements $j^{\prime\prime}_{|J^{\prime\prime}|-d+2}, \ldots j^{\prime\prime}_{|J^{\prime\prime}|}$, and go to Step iii.
\item Step ii.b: Add the constraint $\XQ_{J^{\prime\prime}}=[\XQ_{j^{\prime\prime}_1\ldots j^{\prime\prime}_{|J^\prime|}}x_{j^{\prime\prime}_{|J^\prime|+1}}\cdots x_{j^{\prime\prime}_{|J^{\prime\prime}|}}]_L$ and stop.
\item Step iii. If $|J^{\prime\prime}|-|J^\prime| \geq d$, go to Step ii.a. Otherwise, go to Step ii.b.
\end{itemize}

    \item Step 4b. Replace the monomial $\prod_{j\in J}x_j$ with a new variable $\XQ_J$ and add the set of constraints defined by $C_J$. Then, go to Step 5.    }
    \item Step 5. If $H$ is not empty, go to Step 2; otherwise, stop.
\end{itemize}
\end{enumerate}
\noindent Let \LPQRd~denote the relaxation resulting from applying QUAD-RLT degree reduction to \eqref{eq:PP}. We note that \LPQRd~with $d=2$ is equivalent to \LPQR.
Next in Theorem~\ref{th:degreered}, we show that the theoretical results presented for quadrification in Section~\ref{sec:quad} and Section~\ref{sec:newapproach} still hold for degree reduction. 

\begin{theorem}\label{th:degreered}
Let $S$, $S_{1,d}$, $S_{2,d}$, $S_{3,d}$, and $S_{QR,d}$ be the feasible regions of~\eqref{eq:LP}, \eqref{eq:LPdsch1}, \eqref{eq:LPdsch2}, \eqref{eq:LPdsch3}, and \LPQRd, respectively. Let $N^{LP}$, $N^{LP}_{1,d}$, $N^{LP}_{2,d}$, $N^{LP}_{3,d}$ and $N^{LP}_{QR,d}$ be the number of variables in~\eqref{eq:LP}, \eqref{eq:LPdsch1}, \eqref{eq:LPdsch2}, \eqref{eq:LPdsch3}, and \LPQRd, respectively. Let $R^{LP}$, $R^{LP}_{1,d}$, $R^{LP}_{2,d}$, $R^{LP}_{3,d}$ and $R^{LP}_{QR,d}$ be the number of constraints in~\eqref{eq:LP}, \eqref{eq:LPdsch1}, \eqref{eq:LPdsch2}, \eqref{eq:LPdsch3}, and \LPQRd, respectively. For a given degree $d\geq2$, the following holds
\begin{enumerate}
    \item $S_{3,d}\subseteq S_{2,d} \subseteq S_{1,d}$,
    \item $S\subseteq S_{2,d}$,
    \item $N^{LP}_{QR,d}\leq N^{LP}_{1,d}\leq N^{LP}_{2,d}\leq N^{LP}_{3,d}$,
    \item $N^{LP}\leq N^{LP}_{2,d}\leq N^{LP}_{3,d}$,
    \item $R^{LP}_{QR,d}\leq R^{LP}_{1,d}\leq R^{LP}_{2,d}\leq R^{LP}_{3,d}$.
\end{enumerate}
\end{theorem}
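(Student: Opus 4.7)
The plan is to prove Theorem~\ref{th:degreered} by generalizing, claim by claim, the proofs already given for the quadrification case ($d=2$) in Theorems~\ref{th:sizerel}, \ref{th:sizejsets}, \ref{th:quadrlt}, and the tightness results of \cite{Dalkiran2018}. The constructions of Schemes~1--3 and of QUAD-RLT at degree $d$ are direct generalizations of their $d=2$ counterparts, so the same structural arguments---constraint-set and variable-set nesting, projection/extension of feasible points, and direct counting---apply with only routine bookkeeping.

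For the tightness claims (parts 1 and 2), I would first observe that the equality constraints defining Scheme~1 at degree $d$ form a special case of those defining Scheme~2: every chain step of the generalized $C_J$ can be realized as a product of $d$ $\XQ$-variables whose multiset union is the target $J^\prime$, which is exactly the form allowed by Scheme~2. Likewise, Scheme~2 restricts such products to $J^\prime\subseteq J$ for $J\in\mathbb{J}$, while Scheme~3 allows every $J^\prime\in\bigcup_{d'=d}^{\delta}N^{d'}$. Combined with the obvious nesting $\mathbb{J}_{D_d1}\subseteq\mathbb{J}_{D_d2}\subseteq\mathbb{J}_{D_d3}$ of bound-factor indexing sets, these inclusions yield $S_{3,d}\subseteq S_{2,d}\subseteq S_{1,d}$. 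For part~2, I would adapt the projection argument from the $d=2$ case: given any $(\boldsymbol{x},\boldsymbol{X}^{\text{RLT}})\in S$, define $\XQ_{J^\prime}:=\XRLT_{J^\prime}$ for every $J^\prime\subseteq J$, $J\in\mathbb{J}$, $|J|>d$, and assign the auxiliary RLT variable associated with each product $\XQ_{J_1}\cdots \XQ_{J_d}$ to $\XRLT_{J_1\cup\cdots\cup J_d}$. Since every $J^\prime\subseteq J$ with $|J^\prime|\ge 2$ already indexes an RLT variable in~\eqref{eq:LP} through the linearized bound-factor constraints on $J$, the extension is well defined and, once verified, makes every Scheme~2 equality and bound-factor inequality hold.

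For the size claims (parts 3--5), the arguments mirror those of Theorems~\ref{th:sizerel} and \ref{th:quadrlt} at arbitrary $d$. The nesting of constraint and variable sets from the tightness argument immediately gives $N^{LP}_{1,d}\le N^{LP}_{2,d}\le N^{LP}_{3,d}$ and $R^{LP}_{1,d}\le R^{LP}_{2,d}\le R^{LP}_{3,d}$. For QUAD-RLT versus Scheme~1, I would inspect Step~4a of the generalized QUAD-RLT scheme: by reusing an existing monomial from $H\cup G$ of the highest available degree as the base of the chain, QUAD-RLT introduces fewer intermediate $\XQ$-variables and fewer chain equalities than Scheme~1, which in turn produces fewer multi-variable products to linearize and fewer bound-factor constraints; hence $N^{LP}_{QR,d}\le N^{LP}_{1,d}$ and $R^{LP}_{QR,d}\le R^{LP}_{1,d}$. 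Finally, $N^{LP}\le N^{LP}_{2,d}$ follows by the counting argument from Theorem~\ref{th:sizejsets}: each $J\in\mathbb{J}$ contributes $\sum_{k=2}^{|J|}C(J,k)$ RLT variables to~\eqref{eq:LP}, while Scheme~2 introduces at least as many $\XQ$-variables, plus additional RLT variables to linearize each product $\XQ_{J_1}\cdots \XQ_{J_d}$.

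The hard part will be part~2, $S\subseteq S_{2,d}$. Unlike the counting arguments, it demands explicitly constructing the extended feasible point and verifying that every Scheme~2 equality and every bound-factor inequality of~\eqref{eq:LPdsch2}---particularly those whose defining multisets mix newly introduced $\XQ$-variables with original $x$-variables---is satisfied under the extension $\XQ_{J^\prime}=\XRLT_{J^\prime}$. The delicate point will be confirming that each such mixed bound-factor inequality, once expanded and linearized with the chosen values, coincides with (or is implied by) a bound-factor inequality of~\eqref{eq:LP} already known to hold. Once this verification is carried out, the remaining claims reduce to the counting patterns of the $d=2$ proofs.
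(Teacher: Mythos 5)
Your proposal is correct and takes essentially the same route as the paper: the authors give no detailed argument for Theorem~\ref{th:degreered}, stating only that it ``can be easily derived'' by generalizing the $d=2$ results (Theorems~\ref{th:sizerel}--\ref{th:quadrlt} and the inclusions of \cite{Dalkiran2018}), which is exactly the generalization you outline. Your additional care about part~2 (the explicit extension $\XQ_{J^\prime}:=\XRLT_{J^\prime}$ and verification of the mixed bound-factor constraints) goes beyond what the paper records, but it is consistent with, not divergent from, the paper's intended argument.
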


\noindent The proof of Theorem~\ref{th:degreered} can be easily derived from  the theoretical results presented in Section~\ref{sec:quad} and Section~\ref{sec:newapproach}. Moreover, similar examples to Examples~\ref{ex:cons},~\ref{ex:vars},~\ref{ex:quadrlt}, and \ref{ex:quadtigh} can be found.

\subsection{Computational results}


\begin{table}[!htbp]{\small
\begin{subtable}[t]{\textwidth}
\centering
\begin{tabular}{|l|rrrrrrrrr|}
\toprule
    & Bas. & Sch. 1 & Q-RLT & Sch. 1 & Q-RLT & Sch. 1 & Q-RLT & Sch. 1 & Q-RLT \\
    & & ($d=8$) & ($d=8$) & ($d=6$) & ($d=6$) & ($d=4$) & ($d=4$) & ($d=2$) & ($d=2$)\\
\midrule
gmean gap & 0.016 & 0.027 & 0.008  & 0.024 & 0.008 & 0.035 & 0.014 & 0.071 & 0.019   \\
gmean time & 611.4 & 851.7 & 347.1& 339.0 & 170.3 & 424.6 & 259.2 & 406.4 & 301.4   \\
solved & 11 & 11 & 15 & 11 & 13 & 10 & 10 & 8 & 10   \\
\bottomrule
\end{tabular}
\caption{$\delta=10$, $k=1$}
\label{table:reducedd10k1}
\end{subtable}

\vspace{0.25cm}
\begin{subtable}[t]{\textwidth}
\centering
\begin{tabular}{|l|rrrrrrrrr|}
\toprule
    & Bas. & Sch. 1 & Q-RLT & Sch. 1 & Q-RLT & Sch. 1 & Q-RLT & Sch. 1 & Q-RLT \\
    & & ($d=8$) & ($d=8$) & ($d=6$) & ($d=6$) & ($d=4$) & ($d=4$) & ($d=2$) & ($d=2$)\\
\midrule
gmean gap & 0.297 & 0.329 & 0.140 & 0.170 & 0.055 & 0.253 & 0.080 & 0.260 & 0.090   \\
gmean time & 2373.6 & 2468.6 & 1797.1 & 1723.0 & 1052.0 & 1567.8 & 856.1 & 1655.5 & 796.8   \\
solved & 6 & 5 & 5 & 5 & 7 & 4 & 7 & 4 & 7   \\
\bottomrule
\end{tabular}
\caption{$\delta=10$, $k=10$}
\label{table:reducedd10k10}
\end{subtable}

\vspace{0.25cm}
\begin{subtable}[t]{\textwidth}
\centering
\begin{tabular}{|l|rrrrrrrrr|}
\toprule
    & Bas. & Sch. 1 & Q-RLT & Sch. 1 & Q-RLT & Sch. 1 & Q-RLT & Sch. 1 & Q-RLT \\
    & & ($d=8$) & ($d=8$) & ($d=6$) & ($d=6$) & ($d=4$) & ($d=4$) & ($d=2$) & ($d=2$)\\
\midrule
gmean gap & NA & 2.667 & 0.471 & 3.135 & 0.470 & 3.035 & 0.483 & 3.000 & 0.488   \\
gmean time & 3600.0 & 2223.0 & 1042.8 & 1938.6 & 692.5 & 2006.3 & 681.6 & 1855.8 & 675.9   \\
solved & 0 & 2 & 6 & 2 & 6 & 2 & 6 & 2 & 6   \\
\bottomrule
\end{tabular}
\caption{$\delta=15$, $k=1$}
\label{table:reducedd15k1}
\end{subtable}

\vspace{0.25cm}
\begin{subtable}[t]{\textwidth}
\centering
\begin{tabular}{|l|rrrrrrrrr|}
\toprule
    & Bas. & Sch. 1 & Q-RLT & Sch. 1 & Q-RLT & Sch. 1 & Q-RLT & Sch. 1 & Q-RLT \\
    & & ($d=8$) & ($d=8$) & ($d=6$) & ($d=6$) & ($d=4$) & ($d=4$) & ($d=2$) & ($d=2$)\\
\midrule
gmean gap & NA & 5.547 & 0.834 & 1.947 & 0.347 & 1.383 & 0.276 & 1.316 & 0.287   \\
gmean time & 3600.0 & 3600.0 & 3600.0 & 3600.0 & 2428.7 & 3600.0 & 2123.3 & 3600.0 & 2054.5   \\
solved & 0 & 0 & 0 & 0 & 4 & 0 & 5 & 0 & 5   \\
\bottomrule
\end{tabular}
\caption{$\delta=15$, $k=10$}
\label{table:reducedd15k10}
\end{subtable}

\vspace{0.25cm}
\begin{subtable}[t]{\textwidth}
\centering
\begin{tabular}{|l|rrrrrrrrr|}
\toprule
    & Bas. & Sch. 1 & Q-RLT & Sch. 1 & Q-RLT & Sch. 1 & Q-RLT & Sch. 1 & Q-RLT \\
    & & ($d=8$) & ($d=8$) & ($d=6$) & ($d=6$) & ($d=4$) & ($d=4$) & ($d=2$) & ($d=2$)\\
\midrule
gmean gap & NA & 15.485 & 1.484 & 10.669 & 1.081 & 8.472 & 1.070 & 8.219 & 1.069   \\
gmean time & 3600.0 & 1510.9 & 773.5 & 1083.0 & 427.8 & 1164.9 & 411.0 & 1072.9 & 416.2   \\
solved & 0 & 4 & 6 & 4 & 7 & 4 & 7 & 4 & 7   \\
\bottomrule
\end{tabular}
\caption{$\delta=20$, $k=1$}
\label{table:reducedd20k1}
\end{subtable}

\vspace{0.25cm}
\begin{subtable}[t]{\textwidth}
\centering
\begin{tabular}{|l|rrrrrrrrr|}
\toprule
    & Bas. & Sch. 1 & Q-RLT & Sch. 1 & Q-RLT & Sch. 1 & Q-RLT & Sch. 1 & Q-RLT \\
    & & ($d=8$) & ($d=8$) & ($d=6$) & ($d=6$) & ($d=4$) & ($d=4$) & ($d=2$) & ($d=2$)\\
\midrule
gmean gap & NA & NA & 4.762 & 10.081 & 1.831 & 6.760 & 1.481 & 6.606 & 1.485   \\
gmean time & 3600.0 & 3600.0 & 3600.0 & 3600.0 & 2967.8 & 3600.0 & 2206.7 & 3600.0 & 2166.7   \\
solved & 0 & 0 & 0 & 0 & 3 & 0 & 3 & 0 & 3   \\
\bottomrule
\end{tabular}
\caption{$\delta=20$, $k=10$}
\label{table:reducedd20k10}
\end{subtable}
\caption{Degree reduction results for different $\delta$ and $k$.}
\label{table:reduced}
}\end{table}

In this section, we present a detailed computational analysis of the different schemes to reduce the degree of a polynomial problem \eqref{eq:PP}. We use the same computational setup as in Section~\ref{sec:results}. We also present results on the same set of instances used in Section~\ref{sec:results} except for instances with degree $\delta=5$ since, as discussed earlier, for these instances the best approach is to use Baseline without degree reduction. 

Table~\ref{table:reduced} reports results for Baseline, Scheme 1, and QUAD-RLT since these schemes outperform Schemes 2 and 3 (see Section~\ref{sec:results}). For Scheme 1 and QUAD-RLT we present results for degree reduction to {$d=8$, $d=6$, $d=4$, and $d=2$}. We note that we completed more computational experiments with higher values of $d$ but these experiments revealed that it is best to reduce the degree below 8 rather than higher values.

{Table~\ref{table:reducedd10k1} summarizes the performance of the different approaches on problems with degree $\delta=10$ and $k=1$. Looking at the number of solved instances, we see that the best approach is to use QUAD-RLT to reduce the degree to 8. Regarding the optimality gap, it is also best to use QUAD-RLT and reduce the degree to either 8 or~6. Moreover, in terms of running time, we see that the best option is to use QUAD-RLT and to reduce the degree to~6. 
Similar results are also observed for problems with degree $\delta=10$ and $k=10$ shown in Table~\ref{table:reducedd10k10}. QUAD-RLT is still the best approach with the degree reduced to 6 (the best one in terms of gap and solved instances) or to 4 (the best one in terms of running time and solved instances). Comparing with the results in Table~\ref{table:reducedd10k1}, we notice that as the size of the problem increased, reducing the degree of the problem further, improved the computational time.

Table~\ref{table:reducedd15k1} presents results when the degree of the original problem is 15. Comparing with the results in Table~\ref{table:reducedd10k1} where the degree of the original problem was 10, we see that it is best to reduce the degree to 2 with QUAD-RLT to get the lowest computational time. However, in terms of optimality gap, the best option is to reduce the degree to 6. We note that, QUAD-RLT  solved 6 instances, while Scheme 1 was only able to solve 2 and the Baseline did not solve any instance within the time limit. Finally, the remaining Tables~\ref{table:reducedd15k10}, \ref{table:reducedd20k1} and~\ref{table:reducedd20k10} show the same observations where it is best to lower the degree of the problem further as $\delta$ and $k$ increase. In all cases, QUAD-RLT is better than the Baseline and Scheme 1. 
}



{In summary, Table~\ref{table:reduced} shows that reducing the degree of the polynomial program to a degree higher than~2 is beneficial and it is not necessary best to quadrify the problem, i.e. reduce the degree to 2. For instance when the degree of the problem is relatively small ($\delta=10$), the best option is to reduce the problem to degree $8$ or $6$, since we can handle the resulting RLT relaxations. However, when the degree of the problem is larger ($\delta=15$ and $\delta=20$), it is best to reduce to degree 4 or 2 (quadrify) in order to have RLT relaxations that can be solved. Furthermore, we note that the difference between reducing to one degree or another is bigger when Scheme 1 is used, while they are closer when QUAD-RLT is used. This is because the impact of QUAD-RLT on the performance is more significant than Scheme~1 as discussed in Section~\ref{sec:results}. Thus QUAD-RLT is less sensitive to the degree chosen for reduction.}

\clearpage
\section{Conclusion}\label{sec:conclusions}
{This paper presented a new quadrification approach, QUAD-RLT, to solve polynomial programs using the well known Reformulation-Linearization Technique. QUAD-RLT is specifically designed to exploit the structure of the RLT and thus builds reformulations that are more computationally tractable compared to other quadrification techniques that have been previously discussed in the literature. In particular, we compared QUAD-RLT to three quadrification schemes that have been recently presented and showed that QUAD-RLT produces smaller reformulations and is able to achieve better computational performance in terms of computational time and bounds particularly for high degree polynomial programs. We also presented theoretical results related to the tightness of the relaxations and their size when applying the different quadrification techniques.

As discussed in the paper, reducing the degree of the polynomial program to 2 may not necessarily lead to the best computational performance. Thus, we also generalized the various quadrification techniques including QUAD-RLT to reduce the degree of a polynomial to a degree that is higher than 2. The resulting degree reduction methods are evaluated and the computational results showed that QUAD-RLT outperformed the other methods. Finally, we note that all the methods discussed in this paper have been implemented in the freely available polynomial optimization software, {RAPOSa}.

In future work, we plan to explore the use of machine learning techniques to potentially identify the best degree to which a polynomial program should be reduced to in order to achieve the best computational performance. Furthermore, as these methods are now available in a generic polynomial programming solver, we aim to explore potential industry applications which are modeled using high degree polynomial programs.
}

\section*{Acknowledgements}

This research was supported by NSERC Discovery Grant RGPIN-201703962.

\bibliographystyle{ecta}
\bibliography{main}

\newpage

\appendix

\section{Results evaluating tightness and size at the root node}\label{appendix:results}

In this appendix, we present results evaluating the tightness of the bounds and the size of the linear relaxation at the root node, after applying the different quadrification schemes. The results shown in Table~\ref{table:appendix} provide the following:
\begin{itemize}
    \item ``Opt'': optimal value of the linear relaxation at the root node.
    \item ``Nvars'': number of variables in the linear relaxation at the root node.
    \item ``Ncons'': number of constraints in the linear relaxation at the root node.
\end{itemize}

{
The ``\% Diff.'' is the percentage change in the average of the corresponding value (optimal value, number of variables or number of constraints) with respect to the Baseline. In the case where the Baseline is not capable of solving the problem (indicated by ``NA'' in the Baseline results), the percentage change is calculated with respect to Scheme 1. We note that a negative percentage in the ``Opt'' value implies a worse performance since the bound at the root node is worse. However, a negative percentage in ``Nvars'' and ``Ncons'' implies a better performance since the relaxation has fewer number of variables or fewer constraints, respectively. The results also show the number of instances for which each scheme is strictly best (indicated by ``Count'').}

As expected following the theoretical results presented in Sections~\ref{sec:tight_size_theorems} and~\ref{sec:newapproach}, QUAD-RLT is superior to the other schemes for the majority of cases and in particular when the degree of the polynomial is high ($\delta=15$ and $\delta=20$). For a low degree polynomial, i.e. $\delta=5$ and $\delta=10$, the Baseline often provides the better bound while QUAD-RLT leads to a smaller size problem in the majority of cases. However as the degree of the polynomial increases to $\delta=15$ and $\delta = 20$, the Baseline cannot solve the problems and QUAD-RLT achieves the better bounds in the majority of cases and has smaller size problems compared to all other schemes. We also note that as expected and as discussed in Section~\ref{sec:quad}, Schemes 2 and 3 are always worse than Scheme 1 and QUAD-RLT, and are only able to handle problems with low degree $\delta=5$.

\begin{table}[!htbp]{\scriptsize
\begin{subtable}[t]{0.49\textwidth}
\centering
\begin{tabular}{|l|l|rrrrr|}
\toprule
    & & Bas. & Sch. 1 & Sch. 2 & Sch. 3 & Q-RLT \\
\midrule
Opt & \begin{tabular}{@{}c@{}}\% Diff. \\ Count\end{tabular}  & \begin{tabular}{@{}c@{}}- \\ 10\end{tabular} & \begin{tabular}{@{}c@{}}-161.6 \\ 0\end{tabular} & \begin{tabular}{@{}c@{}}-1.1 \\ 0\end{tabular}  & \begin{tabular}{@{}c@{}} NA \\ 0\end{tabular} & \begin{tabular}{@{}c@{}}-14.0 \\ 0\end{tabular}\\\midrule
Nvars & \begin{tabular}{@{}c@{}}\% Diff. \\ Count\end{tabular}  & \begin{tabular}{@{}c@{}}- \\ 0\end{tabular} & \begin{tabular}{@{}c@{}}-9.9 \\ 0\end{tabular} & \begin{tabular}{@{}c@{}}89.3 \\ 0\end{tabular}  & \begin{tabular}{@{}c@{}} NA \\ 0\end{tabular} & \begin{tabular}{@{}c@{}}-16.7 \\ 30\end{tabular}\\\midrule
Ncons & \begin{tabular}{@{}c@{}}\% Diff. \\ Count\end{tabular}  & \begin{tabular}{@{}c@{}}- \\ 0\end{tabular} & \begin{tabular}{@{}c@{}}5.6 \\ 0\end{tabular} & \begin{tabular}{@{}c@{}}324.2 \\ 8\end{tabular}  & \begin{tabular}{@{}c@{}} NA \\ 0\end{tabular} & \begin{tabular}{@{}c@{}}-6.6 \\ 19\end{tabular}\\
\bottomrule
\end{tabular}
\caption{$\delta=5$, $k=1$}
\label{table:otherdata5k1}
\end{subtable}
\hspace{\fill}
\begin{subtable}[t]{0.49\textwidth}
\centering
\begin{tabular}{|l|l|rrrrr|}
\toprule
    & & Bas. & Sch. 1 & Sch. 2 & Sch. 3 & Q-RLT \\
\midrule
Opt & \begin{tabular}{@{}c@{}}\% Diff. \\ Count\end{tabular}  & \begin{tabular}{@{}c@{}}- \\ 26\end{tabular} & \begin{tabular}{@{}c@{}}-48.8 \\ 0\end{tabular} & \begin{tabular}{@{}c@{}}-9.2 \\ 0\end{tabular}  & \begin{tabular}{@{}c@{}} NA \\ 0\end{tabular} & \begin{tabular}{@{}c@{}}-19.7 \\ 0\end{tabular}\\\midrule
Nvars & \begin{tabular}{@{}c@{}}\% Diff. \\ Count\end{tabular}  & \begin{tabular}{@{}c@{}}- \\ 0\end{tabular} & \begin{tabular}{@{}c@{}}-21.8 \\ 0\end{tabular} & \begin{tabular}{@{}c@{}}245.1 \\ 0\end{tabular}  & \begin{tabular}{@{}c@{}} NA \\ 0\end{tabular} & \begin{tabular}{@{}c@{}}-42.2 \\ 30\end{tabular}\\\midrule
Ncons & \begin{tabular}{@{}c@{}}\% Diff. \\ Count\end{tabular}  & \begin{tabular}{@{}c@{}}- \\ 10\end{tabular} & \begin{tabular}{@{}c@{}}24.0 \\ 0\end{tabular} & \begin{tabular}{@{}c@{}}803.3 \\ 0\end{tabular}  & \begin{tabular}{@{}c@{}} NA \\ 0\end{tabular} & \begin{tabular}{@{}c@{}}-12.6 \\ 20\end{tabular}\\
\bottomrule
\end{tabular}
\caption{$\delta=5$, $k=10$}
\label{table:otherdata5k10}
\end{subtable}

\vspace{0.5cm}
\begin{subtable}[t]{0.49\textwidth}
\centering
\begin{tabular}{|l|l|rrrrr|}
\toprule
    & & Bas. & Sch. 1 & Sch. 2 & Sch. 3 & Q-RLT \\
\midrule
Opt & \begin{tabular}{@{}c@{}}\% Diff. \\ Count\end{tabular}  & \begin{tabular}{@{}c@{}}- \\ 17\end{tabular} & \begin{tabular}{@{}c@{}}-1809.3 \\ 0\end{tabular} & \begin{tabular}{@{}c@{}}-129.7 \\ 0\end{tabular}  & \begin{tabular}{@{}c@{}} NA \\ 0\end{tabular} & \begin{tabular}{@{}c@{}}-155.8 \\ 0\end{tabular}\\\midrule
Nvars & \begin{tabular}{@{}c@{}}\% Diff. \\ Count\end{tabular}  & \begin{tabular}{@{}c@{}}- \\ 0\end{tabular} & \begin{tabular}{@{}c@{}}-76.2 \\ 0\end{tabular} & \begin{tabular}{@{}c@{}}1994.4 \\ 0\end{tabular}  & \begin{tabular}{@{}c@{}} NA \\ 0\end{tabular} & \begin{tabular}{@{}c@{}}-81.9 \\ 30\end{tabular}\\\midrule
Ncons & \begin{tabular}{@{}c@{}}\% Diff. \\ Count\end{tabular}  & \begin{tabular}{@{}c@{}}- \\ 0\end{tabular} & \begin{tabular}{@{}c@{}}-52.8 \\ 0\end{tabular} & \begin{tabular}{@{}c@{}}9821.8 \\ 0\end{tabular}  & \begin{tabular}{@{}c@{}} NA \\ 0\end{tabular} & \begin{tabular}{@{}c@{}}-65.4 \\ 30\end{tabular}\\
\bottomrule
\end{tabular}
\caption{$\delta=10$, $k=1$}
\label{table:otherdata10k1}
\end{subtable}
\hspace{\fill}
\begin{subtable}[t]{0.49\textwidth}
\centering
\begin{tabular}{|l|l|rrrrr|}
\toprule
    & & Bas. & Sch. 1 & Sch. 2 & Sch. 3 & Q-RLT \\
\midrule
Opt & \begin{tabular}{@{}c@{}}\% Diff. \\ Count\end{tabular}  & \begin{tabular}{@{}c@{}}- \\ 29\end{tabular} & \begin{tabular}{@{}c@{}}-44.8 \\ 0\end{tabular}  & \begin{tabular}{@{}c@{}} NA \\ 0\end{tabular}  & \begin{tabular}{@{}c@{}} NA \\ 0\end{tabular} & \begin{tabular}{@{}c@{}}-12.3 \\ 0\end{tabular}\\\midrule
Nvars & \begin{tabular}{@{}c@{}}\% Diff. \\ Count\end{tabular}  & \begin{tabular}{@{}c@{}}- \\ 0\end{tabular} & \begin{tabular}{@{}c@{}}-89.8 \\ 0\end{tabular}  & \begin{tabular}{@{}c@{}} NA \\ 0\end{tabular}  & \begin{tabular}{@{}c@{}} NA \\ 0\end{tabular} & \begin{tabular}{@{}c@{}}-95.2 \\ 30\end{tabular}\\\midrule
Ncons & \begin{tabular}{@{}c@{}}\% Diff. \\ Count\end{tabular}  & \begin{tabular}{@{}c@{}}- \\ 0\end{tabular} & \begin{tabular}{@{}c@{}}-74.8 \\ 0\end{tabular}  & \begin{tabular}{@{}c@{}} NA \\ 0\end{tabular}  & \begin{tabular}{@{}c@{}} NA \\ 0\end{tabular} & \begin{tabular}{@{}c@{}}-87.4 \\ 30\end{tabular}\\
\bottomrule
\end{tabular}
\caption{$\delta=10$, $k=10$}
\label{table:otherdata10k10}
\end{subtable}

\vspace{0.5cm}
\begin{subtable}[t]{0.49\textwidth}
\centering
\begin{tabular}{|l|l|rrrrr|}
\toprule
    & & Bas. & Sch. 1 & Sch. 2 & Sch. 3 & Q-RLT \\
\midrule
Opt & \begin{tabular}{@{}c@{}}\% Diff. \\ Count\end{tabular}  & \begin{tabular}{@{}c@{}} NA \\ 0\end{tabular}  & \begin{tabular}{@{}c@{}}- \\ 0\end{tabular}  & \begin{tabular}{@{}c@{}} NA \\ 0\end{tabular}  & \begin{tabular}{@{}c@{}} NA \\ 0\end{tabular} & \begin{tabular}{@{}c@{}}30.4 \\ 29\end{tabular}\\\midrule
Nvars & \begin{tabular}{@{}c@{}}\% Diff. \\ Count\end{tabular}  & \begin{tabular}{@{}c@{}} NA \\ 0\end{tabular}  & \begin{tabular}{@{}c@{}}- \\ 0\end{tabular}  & \begin{tabular}{@{}c@{}} NA \\ 0\end{tabular}  & \begin{tabular}{@{}c@{}} NA \\ 0\end{tabular} & \begin{tabular}{@{}c@{}}-45.3 \\ 30\end{tabular}\\\midrule
Ncons & \begin{tabular}{@{}c@{}}\% Diff. \\ Count\end{tabular}  & \begin{tabular}{@{}c@{}} NA \\ 0\end{tabular}  & \begin{tabular}{@{}c@{}}- \\ 0\end{tabular}  & \begin{tabular}{@{}c@{}} NA \\ 0\end{tabular}  & \begin{tabular}{@{}c@{}} NA \\ 0\end{tabular} & \begin{tabular}{@{}c@{}}-46.8 \\ 30\end{tabular}\\
\bottomrule
\end{tabular}
\caption{$\delta=15$, $k=1$}
\label{table:otherdata15k1}
\end{subtable}
\hspace{\fill}
\begin{subtable}[t]{0.49\textwidth}
\centering
\begin{tabular}{|l|l|rrrrr|}
\toprule
    & & Bas. & Sch. 1 & Sch. 2 & Sch. 3 & Q-RLT \\
\midrule
Opt & \begin{tabular}{@{}c@{}}\% Diff. \\ Count\end{tabular}  & \begin{tabular}{@{}c@{}} NA \\ 0\end{tabular}  & \begin{tabular}{@{}c@{}}- \\ 0\end{tabular}  & \begin{tabular}{@{}c@{}} NA \\ 0\end{tabular}  & \begin{tabular}{@{}c@{}} NA \\ 0\end{tabular} & \begin{tabular}{@{}c@{}}25.1 \\ 30\end{tabular}\\\midrule
Nvars & \begin{tabular}{@{}c@{}}\% Diff. \\ Count\end{tabular}  & \begin{tabular}{@{}c@{}} NA \\ 0\end{tabular}  & \begin{tabular}{@{}c@{}}- \\ 0\end{tabular}  & \begin{tabular}{@{}c@{}} NA \\ 0\end{tabular}  & \begin{tabular}{@{}c@{}} NA \\ 0\end{tabular} & \begin{tabular}{@{}c@{}}-69.6\\ 30\end{tabular}\\\midrule
Ncons & \begin{tabular}{@{}c@{}}\% Diff. \\ Count\end{tabular}  & \begin{tabular}{@{}c@{}} NA \\ 0\end{tabular}  & \begin{tabular}{@{}c@{}}- \\ 0\end{tabular}  & \begin{tabular}{@{}c@{}} NA \\ 0\end{tabular}  & \begin{tabular}{@{}c@{}} NA \\ 0\end{tabular} & \begin{tabular}{@{}c@{}}-68.3 \\ 30\end{tabular}\\
\bottomrule
\end{tabular}
\caption{$\delta=15$, $k=10$}
\label{table:otherdata15k10}
\end{subtable}

\vspace{0.5cm}
\begin{subtable}[t]{0.49\textwidth}
\centering
\begin{tabular}{|l|l|rrrrr|}
\toprule
    & & Bas. & Sch. 1 & Sch. 2 & Sch. 3 & Q-RLT \\
\midrule
Opt & \begin{tabular}{@{}c@{}}\% Diff. \\ Count\end{tabular}  & \begin{tabular}{@{}c@{}} NA \\ 0\end{tabular}  & \begin{tabular}{@{}c@{}}- \\ 0\end{tabular}  & \begin{tabular}{@{}c@{}} NA \\ 0\end{tabular}  & \begin{tabular}{@{}c@{}} NA \\ 0\end{tabular} & \begin{tabular}{@{}c@{}}40.4 \\ 27\end{tabular}\\\midrule
Nvars & \begin{tabular}{@{}c@{}}\% Diff. \\ Count\end{tabular}  & \begin{tabular}{@{}c@{}} NA \\ 0\end{tabular}  & \begin{tabular}{@{}c@{}}- \\ 0\end{tabular}  & \begin{tabular}{@{}c@{}} NA \\ 0\end{tabular}  & \begin{tabular}{@{}c@{}} NA \\ 0\end{tabular} & \begin{tabular}{@{}c@{}}-54.7\\ 30\end{tabular}\\\midrule
Ncons & \begin{tabular}{@{}c@{}}\% Diff. \\ Count\end{tabular}  & \begin{tabular}{@{}c@{}} NA \\ 0\end{tabular}  & \begin{tabular}{@{}c@{}}- \\ 0\end{tabular}  & \begin{tabular}{@{}c@{}} NA \\ 0\end{tabular}  & \begin{tabular}{@{}c@{}} NA \\ 0\end{tabular} & \begin{tabular}{@{}c@{}}-55.0\\ 30\end{tabular}\\
\bottomrule
\end{tabular}
\caption{$\delta=20$, $k=1$}
\label{table:otherdata20k1}
\end{subtable}
\hspace{\fill}
\begin{subtable}[t]{0.49\textwidth}
\centering
\begin{tabular}{|l|l|rrrrr|}
\toprule
    & & Bas. & Sch. 1 & Sch. 2 & Sch. 3 & Q-RLT \\
\midrule
Opt & \begin{tabular}{@{}c@{}}\% Diff. \\ Count\end{tabular}  & \begin{tabular}{@{}c@{}} NA \\ 0\end{tabular}  & \begin{tabular}{@{}c@{}}- \\ 0\end{tabular}  & \begin{tabular}{@{}c@{}} NA \\ 0\end{tabular}  & \begin{tabular}{@{}c@{}} NA \\ 0\end{tabular} & \begin{tabular}{@{}c@{}}27.3 \\ 30\end{tabular}\\\midrule
Nvars & \begin{tabular}{@{}c@{}}\% Diff. \\ Count\end{tabular}  & \begin{tabular}{@{}c@{}} NA \\ 0\end{tabular}  & \begin{tabular}{@{}c@{}}- \\ 0\end{tabular}  & \begin{tabular}{@{}c@{}} NA \\ 0\end{tabular}  & \begin{tabular}{@{}c@{}} NA \\ 0\end{tabular} & \begin{tabular}{@{}c@{}}-77.4\\ 30\end{tabular}\\\midrule
Ncons & \begin{tabular}{@{}c@{}}\% Diff. \\ Count\end{tabular}  & \begin{tabular}{@{}c@{}} NA \\ 0\end{tabular}  & \begin{tabular}{@{}c@{}}- \\ 0\end{tabular}  & \begin{tabular}{@{}c@{}} NA \\ 0\end{tabular}  & \begin{tabular}{@{}c@{}} NA \\ 0\end{tabular} & \begin{tabular}{@{}c@{}}-76.3\\ 30\end{tabular}\\
\bottomrule
\end{tabular}
\caption{$\delta=20$, $k=10$}
\label{table:otherdata20k10}
\end{subtable}
\caption{Root node quadrification results for different $\delta$ and $k$.}
\label{table:appendix}
}\end{table}

\end{document}